\DeclarePairedDelimiter\fat{\lVert}{\rVert}
\DeclareMathOperator{\Aut}{Aut}
\newcommand{\Z}{\mathbb{Z}}
\newcommand{\R}{\mathbb{R}}
\newcommand{\RP}{\mathbb{RP}}
\newcommand{\colim}{\mathop{\textnormal{colim}}}
\newcommand{\cm}{{\mathfrak{c}}}
\newcommand{\co}{\colon\thinspace}
\newtheorem{theorem}{Theorem}
\newtheorem{lemma}{Lemma}
\newtheorem{proposition}[lemma]{Proposition}
\newtheorem{corollary}[lemma]{Corollary}
\newtheorem{question}[lemma]{Question}
\theoremstyle{definition}
\newtheorem{definition}[lemma]{Definition}
\newtheorem{remark}[lemma]{Remark}
\title{Higher generation by abelian subgroups in Lie groups}
\author{Omar Antol\'{i}n-Camarena, Simon Gritschacher and Bernardo Villarreal}
\date{\today}
\begin{document}

\maketitle

\begin{abstract}
To a compact Lie group $G$ one can associate a space $E(2,G)$ akin to the poset of cosets of abelian subgroups of a discrete group. The space $E(2,G)$ was introduced by Adem, F. Cohen and Torres-Giese, and subsequently studied by Adem and G{\'o}mez, and other authors. In this short note, we prove that $G$ is abelian if and only if $\pi_i(E(2,G))=0$ for $i=1,2,4$. This is a Lie group analogue of the fact that the poset of cosets of abelian subgroups of a discrete group is simply--connected if and only if the group is abelian. 
\end{abstract}

\section{Introduction}

Suppose that $G$ is a discrete group and $\mathcal{F}$ is a family of subgroups of $G$. One can associate to $\mathcal{F}$ a simplicial complex $\mathscr{C}(\mathcal{F},G)$ whose $n$-simplices are the chains of cosets $g_0 A_0\subset g_1 A_1\subset \cdots \subset g_n A_n$ where $g_i\in G$ and $A_i\in \mathcal{F}$ for all $0\leqslant i\leqslant n$. It is the order complex of what is commonly called the coset poset associated to the pair $(\mathcal{F},G)$. A natural question to ask is how the topological properties of $\mathscr{C}(\mathcal{F},G)$ are related to the algebraic properties of $\mathcal{F}$ and $G$. This question was studied by Abels and Holz \cite{AbelsHolz} in some generality, in particular with regards to the higher connectivity of $\mathscr{C}(\mathcal{F},G)$. For example, $\mathscr{C}(\mathcal{F},G)$ is connected if and only if $\mathcal{F}$ covers $G$, and $\mathscr{C}(\mathcal{F},G)$ is simply connected if and only if $G$ is isomorphic to the amalgamation of all $A\in \mathcal{F}$ along their intersections. In their terminology, $\mathcal{F}$ is \textit{$n$-generating} if $\pi_i(\mathscr{C}(\mathcal{F},G))=0$ for $i\leqslant n-1$.

A simple situation arises if one considers the family $\mathcal{A}$ of all abelian subgroups of $G$. Then $\mathscr{C}(\mathcal{A},G)$ is connected, and it is easy to show that $\mathscr{C}(\mathcal{A},G)$ is simply connected if and only if $G$ is abelian, see \cite[Proposition 4.1]{OkayCAG}. When this is the case, $\mathscr{C}(\mathcal{A},G)$ is contractible. On the other hand, it may be surprising that this statement has a direct analogue in the world of Lie groups. It is the objective of this note to formulate and prove this analogue.

First, one has to clarify the meaning of $\mathscr{C}(\mathcal{A},G)$ when $G$ itself carries a topology. The role of the complex $\mathscr{C}(\mathcal{A},G)$ will be played by the geometric realization of a simplicial space, denoted by $E(2,G)$ or $E_{\textnormal{com}}G$ in the literature. It was introduced by Adem, Cohen and Torres-Giese \cite{Ad5} who studied basic properties of $E(2,G)$ as part of a more general construction involving families of nilpotent subgroups of $G$. For compact connected Lie groups $G$, further homological and homotopical properties of $E(2,G)$ were described by Adem and G{\'o}mez \cite{Ad1}. In particular, $E(2,G)$ can be related to the coset spaces $G/A$ for closed abelian subgroups $A\subseteq G$, but the relationship is much more intricate than in the discrete case. When $G$ is discrete, then $E(2,G)$ is homotopy equivalent to $\mathscr{C}(\mathcal{A},G)$.

Our goal is then to establish a precise relationship between the vanishing of the homotopy groups of $E(2,G)$ and commutativity of $G$. To do this we promote the commutator map for $G$ to a simplicial map
\[
\mathfrak{c}\co E(2,G)\to B[G,G]\, ,
\]
which will play a key role in the proof of our main result.

\begin{samepage}
\begin{theorem} \label{thm:main}
For a compact Lie group $G$ the following assertions are equivalent:
\begin{enumerate}[(1)]
\item\label{item:G-ab} $G$ is abelian
\item\label{item:EcomG-contractible} $E(2,G)$ is contractible
\item\label{item:cm-null} $\mathfrak{c}$ is null-homotopic
\item\label{item:Ecomg-highly-conn} $\pi_i(E(2,G))=0$ for $i=1,2,4$.
\end{enumerate}
\end{theorem}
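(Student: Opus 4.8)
The plan is to prove the cycle of implications
\[
\eqref{item:G-ab} \Rightarrow \eqref{item:EcomG-contractible} \Rightarrow \eqref{item:cm-null} \Rightarrow \eqref{item:Ecomg-highly-conn} \Rightarrow \eqref{item:G-ab},
\]
so that the only substantial work is concentrated in the final implication; the first three are comparatively soft. For \eqref{item:G-ab}$\Rightarrow$\eqref{item:EcomG-contractible}, if $G$ is abelian then every finite subset commutes, so the simplicial space underlying $E(2,G)$ agrees with that of $EG$ (the nerve of $G$ acting on itself, or equivalently the simplicial space $[n]\mapsto G^{n+1}$ with the usual face and degeneracy maps), whose realization is contractible. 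The implication \eqref{item:EcomG-contractible}$\Rightarrow$\eqref{item:cm-null} is immediate, since any map out of a contractible space is null-homotopic. For \eqref{item:cm-null}$\Rightarrow$\eqref{item:Ecomg-highly-conn}, I would use the map $\mathfrak{c}\co E(2,G)\to B[G,G]$ to constrain the low-dimensional homotopy of $E(2,G)$: if $\mathfrak{c}$ is null-homotopic one reads off vanishing of the relevant homotopy groups from properties of $E(2,G)$ already established by Adem--Gómez \cite{Ad1} and Adem--Cohen--Torres-Giese \cite{Ad5}, presumably because $E(2,G)$ is built out of commutative data and its low-dimensional homotopy is detected by the commutator.

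The heart of the argument is \eqref{item:Ecomg-highly-conn}$\Rightarrow$\eqref{item:G-ab}, and this is where I expect the main obstacle to lie. The natural strategy is contrapositive: assuming $G$ is \emph{not} abelian, I must produce a nonzero element in $\pi_i(E(2,G))$ for one of $i\in\{1,2,4\}$. The presence of the index $4$ (rather than just $1$ and $2$) is the tell-tale sign that the proof splits according to the structure of $G$. A non-abelian compact Lie group either has non-abelian identity component or has abelian identity component but a component group acting non-trivially, and within the connected non-abelian case the simplest witness is a subgroup isomorphic to $SU(2)$ or $SO(3)$. I would therefore reduce, via the commutator map $\mathfrak{c}$ and functoriality of $E(2,-)$, to analyzing these minimal non-abelian pieces: a homomorphism $H\to G$ from such an $H$ induces $E(2,H)\to E(2,G)$ compatibly with the commutator maps to classifying spaces of commutator subgroups, and I would show that a nontrivial class in $\pi_i(E(2,H))$ maps to a nontrivial class in $\pi_i(E(2,G))$, or at least that its image under $\mathfrak{c}$ is detected in $B[G,G]$.

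Concretely, the index $4$ strongly suggests that the obstruction for a rank-one piece such as $SU(2)$ lives in $\pi_4$: one expects $E(2,SU(2))$ to be related to a space whose first non-trivial homotopy above the fundamental and second groups sits in degree $4$, matching the known computation that $E_{\mathrm{com}}SU(2)$ (equivalently $B_{\mathrm{com}}SU(2)$ after looping, or the relevant homotopy fiber) has interesting homotopy in low degrees coming from $S^3$ and its suspensions. The key computational step will be to exhibit an explicit non-vanishing class in $\pi_4(E(2,SU(2)))$ (and to handle $SO(3)$ and the disconnected case by similar or reduction arguments), using the description of $E(2,G)$ via the coset spaces $G/A$ from \cite{Ad1}. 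The main difficulty is thus twofold: first, organizing the structural reduction so that \emph{every} non-abelian $G$ is detected by one of the three specified degrees and not some other homotopy group; and second, carrying out the low-degree homotopy computation for the minimal non-abelian groups, since $E(2,G)$ is a realization of a simplicial space with intricate coset-space geometry rather than a simple combinatorial complex. I would expect the commutator map $\mathfrak{c}$ to be the essential device that converts these geometric computations into a clean algebraic detection statement, which is presumably why the authors single it out in the introduction.
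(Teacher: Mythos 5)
Your cyclic scheme $(1)\Rightarrow(2)\Rightarrow(3)\Rightarrow(4)\Rightarrow(1)$ founders on the step $(3)\Rightarrow(4)$, which you treat as soft but which admits no direct argument: there is no way to ``read off'' the vanishing of $\pi_1,\pi_2,\pi_4$ of $E(2,G)$ from the null-homotopy of $\mathfrak{c}$ except by first proving that $G$ is abelian, whence $E(2,G)=EG$ is contractible. The paper instead proves $(3)\Rightarrow(1)$ directly: a null-homotopy of $\mathfrak{c}$ restricts on the $1$-skeleton to a null-homotopy of $G\ast G\to S[G,G]$, forcing the algebraic commutator map $c\co G\times G\to G$ to be null-homotopic, so $G$ is homotopy abelian; by the theorem of Araki--James--Thomas a compact \emph{connected} homotopy abelian Lie group is abelian, so $G$ is a central extension of the finite group $\pi_0(G)$ by a torus; and triviality of $\mathfrak{c}_\ast$ on $\pi_1$ then forces $[G,G]$ to be a connected, hence toral, yet finitely generated group, so $[G,G]=1$. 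None of this machinery appears in your proposal, and without it your cycle does not close.

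Your sketch of $(4)\Rightarrow(1)$ correctly identifies the role of $SU(2)$ and of degree $4$, but two essential mechanisms are missing. First, to propagate a nonzero class from $\pi_4(E(2,SU(2)))$ to $\pi_4(E(2,G))$ you need a detection device; knowing $E(2,SU(2))\simeq S^4\vee\Sigma^4\RP^2$ is not enough, since the map induced by $SU(2)\to G$ could a priori kill $\pi_4$. The paper's device is a section, natural in homomorphisms $f\co SU(2)\to G$, of the looped commutator map $\Omega\mathfrak{c}\co\Omega\bar{E}(2,SU(2))\to SU(2)$, constructed from the Akbulut--McCarthy fibre-bundle structure of $c$ over $SU(2)-\{1\}$; composing with $\Omega\mathfrak{c}$ for $G$ sends the class to $\pi_3([G,G])$, where injectivity of $\pi_3(f)$ (Bott--Samelson) detects it. Second, the case of a non-abelian $G$ with abelian identity component is not detected by $\pi_4$ at all: in the paper it is handled by observing that $\pi_1(E(2,G))=0$ forces $[G,G]$ to be connected, hence a torus, so that $B[G,G]\simeq K(\Z^r,2)$ and $2$-connectivity of $E(2,G)$ kills $\mathfrak{c}$. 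That is, the true logical structure is $(4)\Rightarrow(3)\Rightarrow(1)$, so the homotopy-abelian analysis and the Araki--James--Thomas input that you omitted are unavoidable even on your preferred route.
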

\end{samepage}

There are two situations in which a stronger statement can be made than that of Theorem \ref{thm:main}, both of which are treated implicitly in our proof. Firstly, if $G$ is an arbitrary discrete group, Proposition \ref{prop:cGLie} will show that the statement of Theorem \ref{thm:main} remains valid if (\ref{item:Ecomg-highly-conn}) is replaced by $\pi_1(E(2,G))=0$. For discrete groups the results of \cite[Section~I]{AbelsHolz} imply that $E(2,G)$ is homotopy equivalent to $\mathscr{C}(\mathcal{A},G)$. In this situation we obtain a new proof of the fact that $\mathscr{C}(\mathcal{A},G)$ is simply--connected if and only if $G$ is abelian, and Theorem \ref{thm:main} may be viewed as a Lie group analogue thereof. Secondly, if $G$ is a compact Lie group with abelian identity component, then Theorem \ref{thm:main} remains valid if (\ref{item:Ecomg-highly-conn}) is replaced by \textit{$E(2,G)$ is $2$-connected}. This is Proposition \ref{prop:G0abelian2conn}.

It should be mentioned that the equivalence (\ref{item:G-ab}) $\Longleftrightarrow$ (\ref{item:EcomG-contractible}) has a precursor in the work of Adem and G{\'o}mez \cite{Ad1} which concerns a variant of $E(2,G)$ denoted $E(2,G)_{\mathds{1}}$. In general, the space of $n$-simplices of $E(2,G)$ is not connected, and $E(2,G)_{\mathds{1}}$ is obtained by restricting to the basepoint component in each simplicial degree. It is proved in \cite[Corollary 7.5]{Ad1} that for connected $G$, $E(2,G)_{\mathds{1}}$ is rationally acyclic if and only if $E(2,G)_{\mathds{1}}$ is contractible if and only if $G$ is abelian. This statement fails to hold when $G$ is disconnected (it fails for every non-abelian discrete group, for instance). In this case, one must consider $E(2,G)$ instead. While their proof relies on a well known description of the rational cohomology of spaces of commuting elements in Lie groups, we obtain our result by a rather different approach -- more homotopical than homological.

Finally, it is worth mentioning that if not contractible, the spaces $E(2,G)$ have an interesting yet difficult to understand homotopy type. For example, by \cite{lowdim} we have
\[
E(2,SU(2))\simeq S^4\vee \Sigma^4\RP^2 \quad\textnormal{and} \quad E(2,O(2))\simeq S^2\vee S^2\vee S^3\,,
\]
while for $SU=\colim_{n\to \infty} SU(n)$ it was shown in \cite[Theorem 3.4]{G1} that
\[
E(2,SU)\simeq BSU\times BSU\langle 6\rangle\times BSU\langle 8\rangle\times \cdots\,,
\]
where $BSU\langle 2n\rangle$ is the $(2n-1)$-connected cover of $BSU$. If $G$ is an extraspecial $p$-group whose Frattini quotient has rank $2r\geqslant 4$, then the universal cover of $E(2,G)$ is homotopy equivalent to a bouquet of $r$-dimensional spheres \cite{OkaySPCE}. If $G$ is a transitively commutative group, then $E(2,G)$ is homotopy equivalent to a bouquet of circles by \cite[Proposition 8.8]{Ad5}. Other interesting properties of $E(2,G)$ are proved in \cite{AGLT, RV, StafaPolypro, TorresGieseHC}.

\subsection*{Acknowledgments}

SG received funding from the European Union's Horizon 2020 research and innovation programme under the Marie Sklodowska-Curie grant agreement No. 846448. BV acknowledges support from the European Research Council (ERC) under the European Union's Horizon 2020 research and innovation programme (grant agreement No. 682922). BV was also partially supported by Mexico's CONACYT `Programa de Becas de Posgrado y apoyos a la calidad, en la Modalidad de Estancia Posdoctoral en el Extranjero.' This project was also supported by the Danish National Research Foundation through the Copenhagen Centre for Geometry and Topology (DNRF151).

\section{The simplicial space of affinely commuting elements} \label{sec:affinelycommuting}

Let $G$ be a group. We begin by recalling the simplicial bar construction for $G$, since it will form the basis for our constructions in the current and the following sections. The simplicial bar construction for the classifying space of $G$ is the simplicial space $B_\bullet G$ with $n$-simplices
\[
B_n(G):=G^n\,,
\]
face maps
\[
\partial_i\co B_{n}(G)\to B_{n-1}(G)\,, \quad \partial_i(g_1,\dots,g_n):=\begin{cases} (g_1,\dots, g_ig_{i+1},\dots,g_n) & \textnormal{if } 0<i<n \\ (g_1,\dots,g_n) & \textnormal{if }i=0 \\ (g_1,\dots,g_{n-1}) & \textnormal{if } i=n, \end{cases}
\]
and degeneracy maps $s_i\co B_{n}(G)\to B_{n+1}(G)$ given by inserting the identity element $1\in G$ in the $(i+1)$-st position. Similarly, one defines a simplicial space $E_{\bullet} G$ with $n$-simplices
\[
E_n(G):=G^{n+1}\,,
\]
face maps
\[
\partial_i\co E_n(G)\to E_{n-1}(G)\,, \quad \partial_i(g_0,\dots,g_n):=(g_0,\dots,\hat{g}_i,\dots,g_n)\,,
\]
and degeneracy maps $s_i\co E_n(G)\to E_{n+1}(G)$ given by duplicating the $i$-th coordinate. For every $n\geqslant 0$ the group $G$ acts on $E_n(G)$ diagonally by left translation, and this extends to an action on the simplicial space $E_\bullet G$. The quotient map $p\co E_\bullet G\to E_\bullet G/G\cong B_\bullet G$ can be identified with the simplicial map  given on $n$-simplices by
\[
E_{n}(G)\to B_n(G)\,, \quad (g_0,\dots,g_n)\mapsto (g_0^{-1}g_1,\dots,g_{n-1}^{-1}g_n)\, .
\]

Now let us assume for a moment that $G$ is a discrete group. Let $\mathcal{A}$ be the set of abelian subgroups of $G$ partially ordered by inclusion. We may form the union $\bigcup_{A\in \mathcal{A}}B_\bullet A$ inside $B_\bullet G$ and consider the pullback of simplicial sets
\[
\xymatrix{
E_\bullet(2,G)\ar[d]\ar[r] & E_\bullet G\ar[d]^-{p} \\
\bigcup_{A\in \mathcal{A}}B_\bullet A\ar[r] & B_\bullet G
}
\]
The pullback, which we denote by $E_\bullet(2,G)$, can be identified with the simplicial subset of $E_\bullet G$ consisting of those simplices $(g_0,\dots,g_n)\in E_n(G)$ for which $(g_{0}^{-1}g_1,\dots,g_{n-1}^{-1}g_n)\in B_n(A)$ for some abelian subgroup $A\subseteq G$. As $EG$ is contractible, the geometric realization $E(2,G)$ is the homotopy fiber of the inclusion $\bigcup_{A\in \mathcal{A}}BA\to BG$. It is therefore a measure for how well $\bigcup_{A\in \mathcal{A}}BA$ approximates $BG$. In other words, it is a measure for the group's failure to be commutative.

By the results of \cite[Section~I]{AbelsHolz}, $E(2,G)$ is homotopy equivalent to $\mathscr{C}(\mathcal{A},G)$. The same simplicial construction, however, can be carried out for an arbitrary topological group. First, observe:

\begin{lemma} \label{lem:aff}
Let $G$ be a group. The following conditions on a finite subset $S=\{s_0,\ldots,s_n\}$ of $G$ are equivalent:
\begin{enumerate}[(1)]
\item \label{item:affcom-difference} The elements $s_0^{-1}s_1,s_1^{-1}s_{2},\ldots,s_{n-1}^{-1}s_{n}$ pairwise commute.

\item \label{item:affcom-group} The group $\langle S^{-1}S \rangle:=\langle s_i^{-1}s_j \mid s_i,s_j\in S\rangle$ is abelian.
\item \label{item:affcom-coset} The set $S$ is contained in a single left coset of some abelian subgroup of $G$.
\end{enumerate}
\end{lemma}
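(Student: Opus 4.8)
The plan is to route everything through the observation that the group $\langle S^{-1}S\rangle$ is already generated by the $n$ ``consecutive differences'' appearing in condition (\ref{item:affcom-difference}). Write $a_i := s_{i-1}^{-1}s_i$ for $1\leqslant i\leqslant n$. For any indices $i<j$ the telescoping identity
\[
s_i^{-1}s_j = (s_i^{-1}s_{i+1})(s_{i+1}^{-1}s_{i+2})\cdots(s_{j-1}^{-1}s_j) = a_{i+1}a_{i+2}\cdots a_j
\]
expresses an arbitrary generator $s_i^{-1}s_j$ (and, taking inverses, $s_j^{-1}s_i$) as a word in the $a_k$, while $s_i^{-1}s_i=1$. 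Since conversely each $a_k$ is itself one of the generators $s_{k-1}^{-1}s_k$, I would conclude $\langle S^{-1}S\rangle = \langle a_1,\dots,a_n\rangle$.

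With this identity in hand the equivalence (\ref{item:affcom-difference}) $\Leftrightarrow$ (\ref{item:affcom-group}) is immediate. For (\ref{item:affcom-group}) $\Rightarrow$ (\ref{item:affcom-difference}), the $a_i$ are elements of the abelian group $\langle S^{-1}S\rangle$ and hence pairwise commute. For (\ref{item:affcom-difference}) $\Rightarrow$ (\ref{item:affcom-group}), if the generators $a_1,\dots,a_n$ pairwise commute then the group they generate is abelian, and that group is exactly $\langle S^{-1}S\rangle$.

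It remains to connect (\ref{item:affcom-group}) with the coset condition (\ref{item:affcom-coset}). For (\ref{item:affcom-group}) $\Rightarrow$ (\ref{item:affcom-coset}), set $A := \langle S^{-1}S\rangle$, abelian by hypothesis; for each $k$ one has $s_0^{-1}s_k \in S^{-1}S \subseteq A$, so $s_k \in s_0 A$, and thus $S$ lies in the single left coset $s_0 A$ of the abelian subgroup $A$. For (\ref{item:affcom-coset}) $\Rightarrow$ (\ref{item:affcom-group}), suppose $S\subseteq gB$ with $B\subseteq G$ abelian, so each $s_k = g b_k$ with $b_k\in B$; then $s_i^{-1}s_j = b_i^{-1}b_j \in B$, whence $S^{-1}S\subseteq B$ and $\langle S^{-1}S\rangle$ is a subgroup of the abelian group $B$, hence abelian.

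None of the individual implications is genuinely hard; the only step that requires an idea is the reduction of the generating set from all pairs $s_i^{-1}s_j$ to the $n$ consecutive differences $a_i$. It is precisely this reduction that makes the ``pairwise commuting'' hypothesis (\ref{item:affcom-difference}), which only constrains consecutive differences, equivalent to abelianness of the whole group $\langle S^{-1}S\rangle$ in (\ref{item:affcom-group}); everything else is bookkeeping with cosets and the fact that subgroups of abelian groups are abelian.
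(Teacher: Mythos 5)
Your proof is correct and follows essentially the same route as the paper's: the telescoping identity expressing each $s_i^{-1}s_j$ as a product of consecutive differences gives (\ref{item:affcom-difference})~$\Rightarrow$~(\ref{item:affcom-group}), the containment $S\subseteq s_0\langle S^{-1}S\rangle$ gives (\ref{item:affcom-group})~$\Rightarrow$~(\ref{item:affcom-coset}), and the coset computation closes the loop. The only (immaterial) difference is that you organize the argument as two biconditionals rather than the paper's cycle $(1)\Rightarrow(2)\Rightarrow(3)\Rightarrow(1)$.
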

\begin{proof}
Condition (\ref{item:affcom-group}) follows from (\ref{item:affcom-difference}), because each generator of $\langle S^{-1}S \rangle$ can be written as a product of the elements in (\ref{item:affcom-difference}). Condition (\ref{item:affcom-group}) implies (\ref{item:affcom-coset}), because $S\subset s_0 \langle S^{-1}S \rangle$. The proof is completed by showing  $(\ref{item:affcom-coset})\Longrightarrow (\ref{item:affcom-difference})$, which is immediate.
\end{proof}

\begin{definition}
We say that a finite subset $\{g_0,\ldots,g_n\}\subset G$ is \emph{affinely commutative} if it satisfies any of the equivalent conditions listed in Lemma \ref{lem:aff}.
\end{definition}

Let $G$ be a topological group. For each $n\geqslant 0$ consider the space
\[
E_n(2,G):=\{(g_0,\ldots,g_n)\in G^{n+1}\mid \{g_0,\dots,g_n\} \text{ is affinely commutative}\}\, ,
\]
with the topology induced from $G^{n+1}$. These spaces form a sub-simplicial space of $E_\bullet G$ as it can be readily seen that if $\{g_0,\dots,g_n\}$ is affinely commutative, then so are $\{g_0,\dots,\hat{g}_i,\dots,g_n\}$ as well as $\{g_0,\dots,g_i,g_i,\dots,g_n\}$ for any $0\leqslant i\leqslant n$. We denote its geometric realization by
\[
E(2,G):=|E_\bullet(2,G)|\,.
\]

\begin{remark} \label{rem:hommodel}
The space $E(2,G)$ was studied by Adem, Cohen and Torres-Giese in \cite{Ad5}, where the construction was based on a different but isomorphic model of $E_\bullet G$. Namely, let $\mathcal{E}_\bullet G$ denote the simplicial space with $n$-simplices $G^{n+1}$, face maps $\partial_i(g_0,\dots,g_n):=(g_0,\dots,g_ig_{i+1},\dots,g_n)$ for $0\leqslant i < n$ and $\partial_n(g_0,\dots,g_n)=(g_0,g_1,\dots,g_{n-1})$, and degeneracy maps $s_i$ given by inserting the identity element $1\in G$ in the $(i+1)$-st position. Then the map $E_\bullet G\to \mathcal{E}_\bullet G$ given on $n$-simplices by
\[
(g_0,\dots,g_n)\mapsto (g_0,g_0^{-1}g_1,\dots,g_{n-1}^{-1}g_n)
\]
is an isomorphism. Under this isomorphism $E_\bullet(2,G)$ becomes the simplicial space considered in \cite{Ad5}.
\end{remark}

We will need below a description of the fundamental group of $E(2,G)$ when $G$ is discrete. In this case, $E(2,G)$ is the realization of a simplicial set and a standard presentation of its fundamental group can be given, see for example \cite[Proposition 2.7, p.~126]{AschKessarOliver}. To this end, we introduce for each $(g,h)\in G^2$ a formal variable $x_{g,h}$ and set $X:=\{x_{g,h}\mid (g,h)\in G^2\}$. Let us choose the 0-simplex $1\in G$ as the basepoint for $E(2,G)$.

\begin{lemma} \label{lem:pi1}
Let $G$ be discrete. Then, the fundamental group of $E(2,G)$ admits the presentation
\begin{equation*} \label{eq:presentation}
\pi_1(E(2,G),1)=\bigl\langle X \; \bigm| \left\{ x_{g,1}, x_{1,g} \mid g \in G \right\} \cup \left\{  x_{g,h}x_{h,k}x_{k,g} \mid \{g,h,k\}\subset G \textnormal{ is affinely commutative}\right\} \bigr\rangle.
\end{equation*}
\end{lemma}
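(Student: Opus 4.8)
The plan is to begin from the standard presentation of the fundamental group of a connected simplicial set recorded in \cite[Proposition 2.7]{AschKessarOliver}, and then to bring it into the asserted form by Tietze transformations. That standard presentation has one generator for each $1$-simplex, and three kinds of relations: each degenerate $1$-simplex is trivial, each edge of a chosen spanning tree of the $1$-skeleton is trivial, and each $2$-simplex $\sigma$ contributes the relation $x_{\partial_1\sigma}=x_{\partial_2\sigma}\,x_{\partial_0\sigma}$ (with the composition convention fixed there). The first step is therefore to read off this combinatorial data for $E_\bullet(2,G)$.

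Since every one- or two-element subset of $G$ is affinely commutative, we have $E_0(2,G)=G$ and $E_1(2,G)=G^2$, so the generators are precisely the $x_{g,h}$ with $(g,h)\in G^2$, as required. Moreover the $1$-skeleton is the complete graph on the vertex set $G$, so $E(2,G)$ is connected and the edges $(1,g)$ with $g\neq 1$ form a spanning tree (the star at the basepoint $1$). The degenerate $1$-simplices are the $(g,g)=s_0(g)$, and a $2$-simplex is a triple $(g,h,k)$ with $\{g,h,k\}$ affinely commutative, whose faces are $\partial_0(g,h,k)=(h,k)$, $\partial_1(g,h,k)=(g,k)$, and $\partial_2(g,h,k)=(g,h)$. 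Substituting this data into the standard presentation yields the generators $x_{g,h}$ subject to the relations $x_{g,g}=1$, $x_{1,g}=1$, and $x_{g,k}=x_{g,h}x_{h,k}$ for every affinely commutative triple $(g,h,k)$.

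It remains to verify that this presentation is Tietze-equivalent to the stated one, and the bridge is provided by two auxiliary families of triples. First, the triple $(g,k,g)$ is always affinely commutative and its $2$-simplex relation reads $x_{g,g}=x_{g,k}x_{k,g}$; combined with $x_{g,g}=1$ this gives $x_{k,g}=x_{g,k}^{-1}$, so that the relation $x_{g,k}=x_{g,h}x_{h,k}$ coming from a triple $(g,h,k)$ may be rewritten in the cyclic form $x_{g,h}x_{h,k}x_{k,g}=1$. Second, the triple $(g,g,1)$ recovers $x_{g,g}=1$ from the listed relations once $x_{g,1}=x_{1,g}=1$, while $(1,g,1)$ shows that $x_{g,1}=1$ is itself a consequence of $x_{1,g}=1$ together with the triangle relations. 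I expect the only real work to lie in this bookkeeping: confirming that trading the tree-edge and degeneracy relations for the listed ones preserves the normal closure inside the free group on $X$, and keeping the face-map and composition conventions straight so that the relation emerges as $x_{g,h}x_{h,k}x_{k,g}$ rather than a permuted or inverted variant. Everything else — connectivity, the identification of the low-dimensional simplices, and the existence of the spanning tree — is immediate, since any pair of elements of $G$ already spans a $1$-simplex.
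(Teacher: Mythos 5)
Your proposal is correct and follows exactly the route the paper intends: the paper gives no written proof of this lemma but points to the standard presentation of the fundamental group of a simplicial set in \cite[Proposition 2.7]{AschKessarOliver}, which is precisely your starting point, and your Tietze-transformation bookkeeping (using the degenerate triples $(g,k,g)$, $(g,g,1)$ and $(1,g,1)$ to trade the tree-edge and degeneracy relations for the relators $x_{g,1}$, $x_{1,g}$ and the cyclic triangle relations) is sound. Note also that the set of relators $x_{g,h}x_{h,k}x_{k,g}$ is closed under cyclic permutation and, given $x_{k,g}=x_{g,k}^{-1}$, under inversion, so the convention issue you flag cannot actually change the normal closure.
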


Specifically, the generator $x_{g,h}$ is represented by the loop in $E(2,G)$ obtained by concatenating the straight paths from $e$ to $g$ to $h$ to $1$, following the 1-simplices $(1,g)$, $(g,h)$ and $(h,1)$, respectively.

\section{The commutator map} \label{sec:commutatormap}

In this section we introduce our key tool, a natural map $\cm\co E(2,G)\to B[G,G]$ whose homotopy class will inform about contractibility of $E(2,G)$. The construction of $\cm$ will be possible, because of the following simple but crucial observation.

\begin{lemma}\label{lem:afftrip}
Let $\{g,h,k\}\subset G$ be an affinely commutative set. Then $[g,h][h,k]=[g,k]$.
\end{lemma}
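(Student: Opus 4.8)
The plan is to reduce the identity to the commutativity of two explicit elements of $G$ and then quote Lemma \ref{lem:aff}. I will use the convention $[x,y]=xyx^{-1}y^{-1}$; the argument is insensitive to this choice, since the statement is about an intrinsic relation among commutators.

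First I would isolate the only structural input. Because $\{g,h,k\}$ is affinely commutative, condition (\ref{item:affcom-group}) of Lemma \ref{lem:aff} tells us that $\langle S^{-1}S\rangle$ is abelian; in particular the two elements $g^{-1}h$ and $g^{-1}k$ commute. This is the single fact the whole proof rests on.

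Next I would simplify the left-hand side by telescoping. The inner pair $h^{-1}h$ cancels, giving
\[
[g,h][h,k]=(ghg^{-1}h^{-1})(hkh^{-1}k^{-1})=ghg^{-1}kh^{-1}k^{-1}.
\]
Comparing with $[g,k]=gkg^{-1}k^{-1}$, the desired equality is, after cancelling the leading $g$ and the trailing $k^{-1}$, equivalent to $hg^{-1}kh^{-1}=kg^{-1}$, hence (multiplying on the right by $h$) to
\[
hg^{-1}k=kg^{-1}h.
\]
Multiplying this last equation on the left by $g^{-1}$ rewrites it as $(g^{-1}h)(g^{-1}k)=(g^{-1}k)(g^{-1}h)$, which is exactly the commutativity of $g^{-1}h$ and $g^{-1}k$ recorded in the first step.

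I expect no serious obstacle: this is an elementary identity in $G$, and the only genuine choice is picking the substitution that makes the cancellations transparent. The one point to keep honest is the direction of the reductions—each manipulation above is an equivalence, so it genuinely suffices to verify the final commutation relation, and that relation is precisely what affine commutativity supplies.
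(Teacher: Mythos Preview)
Your proof is correct and follows essentially the same idea as the paper's: reduce the identity to a single commutation relation supplied by affine commutativity. Two cosmetic differences are worth noting. First, the paper uses the convention $[x,y]=x^{-1}y^{-1}xy$, so its computation reads $[g,h][h,k]=g^{-1}(h^{-1}g)(k^{-1}h)k$ and then swaps $h^{-1}g$ and $k^{-1}h$ directly (invoking condition~(\ref{item:affcom-difference}) of Lemma~\ref{lem:aff}), arriving at $g^{-1}k^{-1}gk$ in one forward step; your backward reduction lands instead on the commutation of $g^{-1}h$ and $g^{-1}k$ (condition~(\ref{item:affcom-group})). Second, your remark that the statement is insensitive to the commutator convention is true, but since the paper fixes the other convention you might prefer to match it.
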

\begin{proof}
By hypothesis, $[g^{-1}h,h^{-1}k]=[h^{-1}g,k^{-1}h]=1$, and thus \[[g,h][h,k] = g^{-1}(h^{-1}g)(k^{-1}h)k = g^{-1}(k^{-1}h)(h^{-1}g)k = g^{-1}k^{-1}gk.\qedhere\]
\end{proof}

Lemma \ref{lem:afftrip} is precisely what is needed to verify the following.

\begin{corollary}
The maps
\begin{alignat*}{3}
\cm_n\co &&  E_n(2,G) \to &\; [G,G]^n \\
&& (g_0,\ldots,g_n)\mapsto  &\;  ([g_0,g_1],\ldots,[g_{n-1},g_n])\,,
\end{alignat*}
defined for all $n\geqslant 0$, assemble into a map of simplicial spaces $\mathfrak{c}_\bullet \co E_\bullet(2,G)\to B_\bullet [G,G]$.
\end{corollary}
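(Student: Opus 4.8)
The plan is to verify directly that the family $\{\cm_n\}_{n\geqslant 0}$ defines a morphism of simplicial spaces, treating continuity together with the choice of codomain first, and then checking compatibility with the face and degeneracy operators. Continuity is immediate: the commutator $G\times G\to G$, $(x,y)\mapsto [x,y]=x^{-1}y^{-1}xy$, is built from the multiplication and inversion of $G$ and is therefore continuous, so each coordinate of $\cm_n$ is continuous as a map $E_n(2,G)\to G$. Since every coordinate is by definition a commutator, the map corestricts to a continuous map $E_n(2,G)\to [G,G]^n$, where $[G,G]$ carries the subspace topology from $G$. Thus each $\cm_n$ has the asserted target, and it remains only to establish the simplicial identities.

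To that end I would abbreviate $c_j:=[g_{j-1},g_j]$, so that $\cm_n(g_0,\dots,g_n)=(c_1,\dots,c_n)$, and dispose of the degeneracies and outer faces, which require no hypotheses on the tuple. The degeneracy $s_i$ on $E_\bullet(2,G)$ duplicates $g_i$, and the two equal adjacent entries contribute the commutator $[g_i,g_i]=1$; hence $\cm_{n+1}\circ s_i$ inserts the identity of $[G,G]$ between $c_i$ and $c_{i+1}$, matching $s_i\circ\cm_n$. Likewise the outer faces $\partial_0$ and $\partial_n$ on $E_\bullet(2,G)$ delete $g_0$ and $g_n$, removing precisely the commutator $c_1$ or $c_n$, which agrees with the corresponding face on $B_\bullet[G,G]$.

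The only substantive point is the inner faces $\partial_i$ with $0<i<n$, and this is exactly where Lemma \ref{lem:afftrip} enters. Deleting $g_i$ merges the consecutive pairs $(g_{i-1},g_i)$ and $(g_i,g_{i+1})$ into the single pair $(g_{i-1},g_{i+1})$, so $\cm_{n-1}\circ\partial_i$ records the single commutator $[g_{i-1},g_{i+1}]$ in that slot; on the target, the inner face $\partial_i$ multiplies adjacent entries, producing $c_i c_{i+1}=[g_{i-1},g_i][g_i,g_{i+1}]$. These coincide precisely because $[g_{i-1},g_i][g_i,g_{i+1}]=[g_{i-1},g_{i+1}]$, which is the conclusion of Lemma \ref{lem:afftrip} applied to the triple $\{g_{i-1},g_i,g_{i+1}\}$. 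To invoke that lemma one must know this triple is affinely commutative; but $\{g_0,\dots,g_n\}$ is affinely commutative and hence, by Lemma \ref{lem:aff}, lies in a single left coset of an abelian subgroup, so every subset lies in that same coset and is therefore itself affinely commutative. This inner-face identity is the crux of the argument and the sole reason Lemma \ref{lem:afftrip} was isolated; once it is in hand, assembling the $\cm_n$ into the simplicial map $\cm_\bullet$ is routine bookkeeping.
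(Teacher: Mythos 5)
Your verification is correct and is exactly the argument the paper intends: the paper omits the proof, remarking only that Lemma \ref{lem:afftrip} is ``precisely what is needed,'' and your treatment of the inner faces via that lemma (together with the observation that subsets of affinely commutative sets are affinely commutative) is the whole content. The routine checks for continuity, degeneracies, and outer faces are all handled correctly as well.
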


\begin{definition}
Upon geometric realization $\mathfrak{c}_\bullet$ defines a map
\[
\cm\co E(2,G)\to B[G,G]\,,
\]
which we refer to as the \emph{commutator map}.
\end{definition}

The rest of this section is devoted to establishing some basic properties of $\cm$. Let
\[
c\co G\times G\to G\,,\quad(x,y)\mapsto [x,y]
\]
be the \emph{algebraic commutator map} for $G$. Note that $c$ factors through a map $\tilde{c}\co G\wedge G\to [G,G]$, since $[g,h]=1$ if either $g=1$ or $h=1$.

\begin{definition}
A topological group $G$ is called \emph{homotopy abelian} if the algebraic commutator map $c$ is null-homotopic.
\end{definition}

The following proposition summarizes the main features of the commutator map $\cm$ that the proof of Theorem \ref{thm:main} will rely on.

\begin{proposition} \label{prop:cGLie}
Let $G$ be either a discrete group or a compact Lie group, let $\cm\co E(2,G)\to B[G,G]$ be the commutator map, and let \[\cm_\ast\co \pi_1(E(2,G))\to \pi_1(B[G,G])\] be the map induced by $\cm$ on fundamental groups.
\begin{enumerate}[(1)]
\item\label{item:on-gen} If $G$ is discrete, then $\cm_\ast$ satisfies $\cm_\ast(x_{g,h})=[g,h]$ for all $x_{g,h}\in \pi_1(E(2,G))$.
\item\label{item:cm-null-or-EcomG-2d+1-conn} If either $\cm$ is null-homotopic or $E(2,G)$ is $(2\dim G+1)$-connected, then $G$ is homotopy abelian.
\item\label{item:cm-0-on-pi1} The map $\cm_\ast$ is surjective, and it is trivial if and only if $[G,G]$ is a connected Lie group.
\end{enumerate}
\end{proposition}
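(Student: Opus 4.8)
The plan is to build everything from the single family of loops $\gamma_{g,h}$ described after Lemma~\ref{lem:pi1}: for $(g,h)\in G\times G$ let $\gamma_{g,h}$ be the loop based at $1$ that traverses the $1$-simplices $(1,g)$, $(g,h)$, $(h,1)$ in turn. Every pair is affinely commutative, so all three edges genuinely lie in $E_1(2,G)$, the loop $\gamma_{g,h}$ depends continuously on $(g,h)$, and $\gamma_{1,1}$ is constant. Since $\cm$ carries the edge $(g_0,g_1)$ to the edge $[g_0,g_1]$ of $B[G,G]$, and $[1,g]=[h,1]=1$, the image $\cm\circ\gamma_{g,h}$ is a concatenation of two constant edges with the edge $[g,h]$; under the natural identification $\pi_1(BH)\cong\pi_0(H)$, valid for any well-pointed topological group $H$, this represents the path component of $[g,h]$ in $\pi_0([G,G])$. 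For discrete $G$ one has $x_{g,h}=[\gamma_{g,h}]$ and $\pi_0([G,G])=[G,G]$, so this computation is exactly the assertion $\cm_\ast(x_{g,h})=[g,h]$ of (\ref{item:on-gen}).

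For (\ref{item:cm-0-on-pi1}) I would argue as follows. Because $[G,G]$ is generated by commutators, the classes of the elements $[g,h]$ generate the group $\pi_0([G,G])$; each such class lies in the image of $\cm_\ast$ by the computation above, so $\cm_\ast$ is surjective. Granting surjectivity, $\cm_\ast$ is trivial if and only if $\pi_0([G,G])$ is trivial, i.e. if and only if $[G,G]$ is connected. For compact $G$ the subgroup $[G,G]$ is closed (by boundedness of commutator width it is the continuous image of a compact space), hence a compact Lie group, so ``connected'' and ``connected Lie group'' coincide; for discrete $G$ the group $[G,G]$ is a $0$-dimensional Lie group, connected iff trivial iff $G$ is abelian.

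The substance of the proposition is (\ref{item:cm-null-or-EcomG-2d+1-conn}), and here I would promote the family $\gamma_{g,h}$ to a single map and compare it with $\tilde c$. Varying $(g,h)$ assembles the loops into a pointed map $f\co G\times G\to\Omega E(2,G)$. On each axis $f$ is null-homotopic, continuously in the remaining coordinate: the $2$-simplex $(1,g,1)$ lies in $E_2(2,G)$ (its differences $g$ and $g^{-1}$ commute) and exhibits $(1,g)\cdot(g,1)\simeq\mathrm{const}$, giving a based null-homotopy of $g\mapsto\gamma_{g,1}$, and symmetrically for $h\mapsto\gamma_{1,h}$. Hence $f$ factors up to homotopy through $G\wedge G$, producing $\bar f\co G\wedge G\to\Omega E(2,G)$ with adjoint
\[
\Phi\co \Sigma(G\wedge G)\longrightarrow E(2,G)\,.
\]
Since the edges $(1,g)\mapsto 1$ and $(h,1)\mapsto 1$ become constant, a reparametrization homotopy continuous in $(g,h)$ shows $\Omega\cm\circ f\simeq\iota\circ c$, where $c(g,h)=[g,h]$ and $\iota\co[G,G]\to\Omega B[G,G]$ is the canonical weak equivalence; moreover $\cm$ sends each null-homotopy $(1,g,1)$ to a degenerate simplex, so this is compatible with the passage to the smash. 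Thus $\Omega\cm\circ\bar f\simeq\iota\circ\tilde c$, and adjointly $\cm\circ\Phi$ is the map classifying $\tilde c$. As $\iota$ is a weak equivalence and $G\wedge G$ is a finite CW complex, $\cm\circ\Phi$ is null-homotopic if and only if $\tilde c$ is, i.e. if and only if $G$ is homotopy abelian.

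It remains to observe that each hypothesis forces $\cm\circ\Phi\simeq\ast$. If $\cm$ is null-homotopic this is immediate. If instead $E(2,G)$ is $(2\dim G+1)$-connected, note that $G$ is a compact manifold, so $G\wedge G$ has the homotopy type of a finite CW complex of dimension $2\dim G$ and $\Sigma(G\wedge G)$ one of dimension $2\dim G+1$; a map from a complex of dimension $n$ into an $n$-connected space is null-homotopic, whence $\Phi\simeq\ast$. In either case $G$ is homotopy abelian. I expect the main obstacle to be making the factorization of $f$ through the smash precise: one must check that the two axis null-homotopies agree at the basepoint and are carried by $\cm$ to the corresponding degenerations, so that the identification $\cm\circ\Phi\simeq(\text{classifying map of }\tilde c)$ holds up to homotopy and not merely on $\pi_1$. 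Point-set care (well-pointedness of $G$, a finite CW model, and continuity in $(g,h)$ of the reparametrization homotopies) will be needed throughout.
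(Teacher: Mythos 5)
Your proposal is correct and follows essentially the same route as the paper: for part (\ref{item:cm-null-or-EcomG-2d+1-conn}) your map $\Phi\co \Sigma(G\wedge G)\to E(2,G)$ assembled from the loops $\gamma_{g,h}$ is, up to the standard equivalence $G\ast G\simeq \Sigma(G\wedge G)$, exactly the inclusion of the simplicial $1$-skeleton $G\ast G\to \fat{E_\bullet(2,G)}$ that the paper uses, and the dimension/connectivity count and the adjunction down to $\tilde{c}$ are identical. The only real divergence is in part (\ref{item:cm-0-on-pi1}), where you prove surjectivity of $\cm_\ast$ directly by evaluating on the loops $\gamma_{g,h}$ and noting that commutator classes generate $\pi_0([G,G])$ (using closedness of the algebraic commutator subgroup, as you observe), whereas the paper routes through the comparison map $E(2,G^{\delta})\to E(2,G)$ from the underlying discrete group; your version is a mild shortcut, not a different method.
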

\begin{proof}
First, assume that $G$ is discrete. As pointed out at the end of Section \ref{sec:affinelycommuting}, the generator $x_{g,h}\in \pi_1(E(2,G))$ is represented by the path obtained by concatenating the 1-simplices $(1,g)$, $(g,h)$ and $(h,1)$. Statement (\ref{item:on-gen}) follows, because $\mathfrak{c}$ takes these 1-simplices to $[1,g]=1$, $[g,h]$ and $[h,1]=1$ in $[G,G]$, respectively. 

Next we prove (\ref{item:cm-null-or-EcomG-2d+1-conn}). Since $G$ is either discrete or a Lie group, the simplicial space $E_{\bullet}(2,G)$ is proper (cf. \cite[Appendix]{Ad1}), hence the fat and thin realizations are naturally homotopy equivalent: $\fat{E_{\bullet}(2,G)}\simeq E(2,G)$. If $X$ is the geometric realization of a semi-simplicial space, we denote by $F_kX$ the $k$-th term in the skeletal filtration of $X$. Then, $F_1\fat{E_\bullet G} =F_1\fat{E_\bullet(2,G)}\cong G\ast G$ is the topological join, and $F_1\fat{B_\bullet G} \cong SG$ is the unreduced suspension. There is a commutative diagram
\begin{equation}\label{cdiagram}
  \begin{gathered}
\xymatrix{
G\ast G\ar[r]^{c'}\ar[d]&S [G,G] \ar[d] \\
\fat{E_\bullet(2,G)}\ar[r]^{\cm}&\fat{B_\bullet [G,G]}
 }
\end{gathered}
\end{equation}
where $c'([t,g,h])=[t,[g,h]]$ for $t\in [0,1]$ and $g,h\in G$. Up to homotopy, $c'$ can be identified with the map $\Sigma \tilde{c}\co \Sigma G\wedge G\to \Sigma [G,G]$. If $\cm$ is null-homotopic, diagram (\ref{cdiagram}) implies that the composite
\[
\Sigma(G\wedge G)\xrightarrow{\Sigma \tilde{c}}\Sigma [G,G]\to B[G,G]
\]
is null-homotopic as well. We also get that this composite is null-homotopic if $\fat{E_\bullet(2,G)}$ is $(2 \dim G + 1)$-connected, since then the map $G\ast G \to \fat{E_\bullet(2,G)}$ appearing in the diagram is null-homotopic by a standard obstruction theory argument. As a map between path connected spaces is null-homotopic if and only if it is based null-homotopic, the adjoint map
\[
G\wedge G\xrightarrow{\tilde{c}}[G,G]\xrightarrow{\simeq}\Omega B[G,G]\,,
\]
and hence $\tilde{c}$, are null-homotopic. Since the algebraic commutator map $c\co G\times G\to G$ factors through $\tilde{c}$, it is null-homotopic as well. This finishes the proof of (\ref{item:cm-null-or-EcomG-2d+1-conn}).

Now we prove (\ref{item:cm-0-on-pi1}). If $G$ is discrete it follows directly from statement (\ref{item:on-gen}). Assume $G$ is a compact Lie group and let $G^{\delta}$ denote $G$ equipped with the discrete topology. Let $d\co G^{\delta}\to G$ be the canonical map. The commutator map $\mathfrak{c}$ for $G$ and the commutator map $\mathfrak{c}^{\delta}$ for $G^{\delta}$ are related by a commutative diagram
\[
\xymatrix{
E(2,G^{\delta}) \ar[d]^-{E(2,d)} \ar[r]^{\mathfrak{c}^{\delta}} & B[G^\delta,G^{\delta}] \ar[d]^-{Bd} \\
E(2,G) \ar[r]^{\mathfrak{c}} & B[G,G].
}
\]
Recall that for Lie groups, the commutator subgroup $[G,G]$ is defined to be the closure of the algebraic commutator subgroup. But the commutator subgroup of a compact Lie group is always closed (see \cite[Theorem~6.11]{HM13}), so $[G,G]^\delta = [G^\delta,G^\delta]$.

The diagram induces a commutative diagram on fundamental groups. Now consider the composite homomorphism
\begin{equation} \label{eq:compositesurj}
\pi_1(E(2,G^\delta))\xrightarrow{\mathfrak{c}^{\delta}_{\ast}} [G^{\delta},G^{\delta}]\xrightarrow{\pi_1(Bd)} \pi_1(B[G,G])
\end{equation}
obtained by going through the top right corner of the diagram. Under the isomorphism $\pi_1(B[G,G])\cong \pi_0([G,G])$ and the identification of $[G^{\delta},G^{\delta}]$ with $[G,G]^{\delta}$ the map $\pi_1(Bd)$ corresponds to the canonical surjection $[G,G]^{\delta} \to \pi_0([G,G])$. Moreover, by part (\ref{item:on-gen}) the map $\mathfrak{c}^{\delta}_{\ast}$ is surjective. Together this implies that (\ref{eq:compositesurj}) is surjective, and by commutativity of the diagram $\mathfrak{c}_\ast$ must be surjective, as well. In particular, if $\mathfrak{c}_\ast$ is trivial, then $\pi_0([G,G])=1$. Conversely, if $[G,G]$ is path connected, then $B[G,G]$ is simply connected, hence $\mathfrak{c}_\ast$ is trivial.
\end{proof}

\begin{remark} \label{rem:homotopyabeliandoesnotimplycontractible}
Perhaps surprisingly, there exist homotopy abelian compact Lie groups $G$ for which $\cm$ is not null-homotopic. Hence, the converse of part (\ref{item:cm-null-or-EcomG-2d+1-conn}) of Proposition \ref{prop:cGLie} fails to hold. An example illustrating this is the central extension
\[
1\to S^1\to (S^1\times Q_8)/\Z/2 \to \Z/2\times \Z/2 \to 1\, ,
\]
where $Q_8$ is the quaternion group of order eight. The quotient $G=(S^1\times Q_8)/\Z/2$ is taken over the central subgroup $\Z/2=\langle(-1,-1)\rangle\subset S^1\times Q_8$. It is indeed homotopy abelian; the commutator subgroup is $[G,G]=\{[(1,1)],[(-1,1)]\}\cong \Z/2$, which is a discrete subgroup of the path-connected group $S^1$, thus making the algebraic commutator map null-homotopic. But by part (\ref{item:cm-0-on-pi1}) of Proposition \ref{prop:cGLie}, $\cm$ cannot be trivial on fundamental groups, since $[G,G]$ is not connected.
\end{remark}

\begin{remark} \label{rem:commutatorinverse}
Let $j\co B[G,G]\to BG$ be the map induced by the inclusion $[G,G]\subseteq G$. There is another description, up to homotopy, of the composition $j\mathfrak{c}\co E(2,G)\to BG$. We shall not need it to prove our main theorem; but it seems worth mentioning, because it is not obvious from the definition. Let $C_n(G)\subseteq G^n$ denote the subspace of $n$-tuples of commuting elements in $G$. Then $C_\bullet(G)\subseteq B_\bullet G$ is a sub-simplicial space, whose realization we denote by $B(2,G)$. The composite map
\[
E(2,G)\subseteq EG \xrightarrow{p} BG
\]
factors through the inclusion $i\co B(2,G)\to BG$. By abuse of notation, we write $p\co E(2,G)\to B(2,G)$ for the projection. Note that there is an automorphism $\phi^{-1}\co B(2,G)\to B(2,G)$ induced by the map $G\to G$, $g\mapsto g^{-1}$. We claim that the diagram
\[
\xymatrix{
E(2,G) \ar[r]^-{p} \ar[d]^-{\mathfrak{c}} & B(2,G) \ar[r]^-{\phi^{-1}} & B(2,G) \ar[d]^-{i} \\
B[G,G] \ar[rr]^-{j} & & BG
}
\]
commutes up to homotopy. Indeed, it is tedious but straightforward to verify that the collection of maps $\{h_i\}_{0\leqslant i\leqslant n}$ defined by
\begin{alignat*}{1}
h_i\co E_{n}(2,G) & \to B_{n+1}(G)=G^{n+1} \\
(g_0,\dots,g_n) & \mapsto ([g_0,g_1],\dots, [g_{i-1},g_i],g_i^{-1},g_{i+1}^{-1}g_i,\dots,g_{n}^{-1}g_{n-1})
\end{alignat*}
is a simplicial homotopy between $j\mathfrak{c}$ and $i\phi^{-1}p$ in the sense of \cite[Definition 9.1]{M72}.
\end{remark}

\section{The proof of Theorem \ref{thm:main}} \label{sec:main}

The proof of Theorem \ref{thm:main} will require a couple of propositions, the first of which is a characterization of homotopy abelian compact Lie groups.

\begin{proposition} \label{prop:homotopyabelian}
Let $G$ be a compact Lie group. Then $G$ is homotopy abelian if and only if $\pi_0(G)$ is abelian and $G$ is a central extension of $\pi_0(G)$ by a torus.
\end{proposition}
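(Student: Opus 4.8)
The plan is to prove both implications by restricting the algebraic commutator map $c\co G\times G\to G$ to suitable subspaces and combining this with the structure theory of compact Lie groups. Throughout write $G_0$ for the identity component and $\pi\co G\to \pi_0(G)$ for the projection onto the (finite) component group.

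For the forward direction assume $G$ is homotopy abelian, so $c$ is null-homotopic. First I note that the composite $\pi\circ c\co G\times G\to \pi_0(G)$ agrees, on path components, with the commutator map of the finite group $\pi_0(G)$; being null-homotopic into a discrete space it is constant, equal to $\pi(c(1,1))=1$. Hence $\pi_0(G)$ is abelian and every commutator lies in $G_0$. Restricting $c$ to $G_0\times G_0$ gives a map sending $(1,1)\mapsto 1$ whose image lies in the path component $G_0$ by connectedness, namely the commutator map of $G_0$; a null-homotopy of $c$ restricts (and stays in $G_0$ by connectedness of $(G_0\times G_0)\times[0,1]$), so $G_0$ is itself homotopy abelian. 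I would then invoke the classical fact that a \emph{connected} compact Lie group is homotopy abelian if and only if it is a torus, concluding $G_0=T$. This is the step I expect to be the main obstacle: the commutator map is automatically trivial on rational cohomology, so one cannot detect non-tori homologically and must appeal to finer invariants (the commutator map represents the universal Samelson product, which is non-null on any non-torus), or to Hubbuck's theorem on homotopy-commutative $H$-spaces.

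It remains to show $T=G_0$ is central. Since $G_0$ is normal, every $g\in G$ normalises $T$, so $\phi_g\co T\to T$, $\phi_g(t)=g^{-1}tg$, is an automorphism and $[g,t]=\phi_g(t)^{-1}t\in T$ for $t\in T$. As $T$ is a $K(\pi_1 T,1)$, homotopy classes of self-maps are detected on $\pi_1(T)$, and the map $f_g\co T\to T$, $t\mapsto[g,t]$, induces $\mathrm{id}-M_g$ where $M_g=(\phi_g)_\ast\in\Aut(\pi_1 T)$. Restricting the null-homotopy of $c$ to $\{g\}\times T$ shows $f_g$ is null-homotopic as a map into $G$, i.e.\ $j_\ast(\mathrm{id}-M_g)=0$ where $j\co T=G_0\hookrightarrow G$. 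Since $G_0$ is the path component of the identity, $j_\ast\co\pi_1(T)\to\pi_1(G)$ is an isomorphism, so $M_g=\mathrm{id}$ and hence $\phi_g=\mathrm{id}$. (This last step is exactly where knowing $G_0$ is a torus is essential: for a general $G_0$ the kernel of $j_\ast$ is the coroot lattice, which contains every $\operatorname{im}(\mathrm{id}-M_g)$, so no contradiction would arise.) Thus every $g\in G$ centralises $G_0$, and $G$ is a central extension of the abelian group $\pi_0(G)$ by the torus $G_0$.

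For the reverse direction assume $\pi_0(G)$ is abelian and $G_0$ is a central torus. Because $G_0$ is central, $[xu,yv]=[x,y]$ for all $u,v\in G_0$, so $c$ is constant on each connected component of $G\times G$; because $\pi_0(G)$ is abelian, each such value projects trivially to $\pi_0(G)$ and therefore lies in the path-connected group $G_0$. The components of the compact manifold $G\times G$ are open, so choosing for each component a path in $G_0$ from its constant value to the identity assembles into a continuous homotopy from $c$ to the constant map at $1$. Hence $c$ is null-homotopic and $G$ is homotopy abelian, completing the proof.
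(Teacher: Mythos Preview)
Your proof is correct and follows essentially the same route as the paper's: both directions hinge on restricting the commutator map, invoking the Araki--James--Thomas theorem to get $G_0=T$, and then proving centrality by analysing the induced map on $\pi_1(T)\cong H_1(T;\Z)$ of $t\mapsto [g,t]$ (the paper uses $t\mapsto [t,g]$, which is equivalent). For the converse the paper computes the commutator via a classifying $2$-cocycle, whereas your observation $[xu,yv]=[x,y]$ for central $u,v$ reaches the same factorisation through $\pi_0(G)\times\pi_0(G)\to T$ more directly.
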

\begin{proof}
Suppose that $G$ is homotopy abelian. Let $G_0\subseteq G$ be the component of the identity and let $T\subseteq G_0$ be a maximal torus. As the commutator map $c\co G\times G\to G$ is null-homotopic, it factors through $G_0$ and its restriction to $G_0$ is null-homotopic, too. It follows that $G_0$ is homotopy abelian. A result of Araki, James and Thomas \cite{AJTh} asserts that a compact, connected, homotopy abelian Lie group is abelian. Hence, $G_0=T$. Thus $G$ fits into an extension
\[
1\to T \to G \xrightarrow{p} \pi_0(G)\to 1\,.
\]
It is clear that $\pi_0(G)$ is abelian, and so it remains to show that $T$ is central.

Note that $\Aut(T)\cong \Aut(H_1(T;\Z))$ is discrete. For $g\in G$ let $\text{conj}_g\in \Aut(T)$ denote the inner automorphism $t\mapsto g^{-1}tg$. The map $g\mapsto \text{conj}_g$ must be constant on connected components and thus factors through a representation
\[
\rho\co \pi_0(G)\to \Aut(H_1(T;\Z))\,.
\]
To show that $T$ is central, it is enough to show that $\rho$ is constant. Fix $g\in G$. Let $c(-,g)\co T\to T$ denote the composition
\[
T \xrightarrow{t\mapsto (t,g)} G\times G \xrightarrow{c} T\, ,
\]
 which has image in $T=G_0$ because $c(1,g)=1$. The composite map is null-homotopic, because the commutator map is null-homotopic. On the other hand, $c(-,g)$ can be identified with the composition
\[
T \xrightarrow{t\mapsto (t^{-1},t)} T\times T \xrightarrow{id\times \text{conj}_g} T\times T \xrightarrow{\cdot } T\,,
\]
where the last map is multiplication in $T$. As this map is null-homotopic, the induced map on $H_1(T;\Z)$ is zero. This implies that, for any $x\in H_1(T;\Z)$, we must have
\[
0=-x+\rho(p(g)) (x)\,,
\]
hence $\rho(p(g))=id$. This finishes the proof that $T$ is central.

Conversely, suppose that $G$ is a central extension of $\pi_0(G)$ by a torus $T$ and assume that $\pi_0(G)$ is abelian. The central extension is classified by a 2-cocycle $\omega\co \pi_0(G)\times \pi_0(G)\to T$. As an abstract group, $G$ is isomorphic to $T\times \pi_0(G)$ with group law
\[
(t,x) (s,y) := (ts\omega(x,y),xy)\,,
\]
see \cite[Remark 18.1.14]{HN12}. A short computation shows that, when $\pi_0(G)$ is abelian, the commutator of any two elements $(t,x)$ and $(s,y)$ of $T\times \pi_0(G)$ reads
\[
[(t,x),(s,y)]=(\omega(x,y)\omega(y,x)^{-1},0)\, .
\]
Thus, the commutator map $c\co G\times G\to G$ factors through $\pi_0(G)\times \pi_0(G)$ and has image in $T$. Since $T$ is connected, $c$ is null-homotopic.
\end{proof}

\begin{remark}
The central extension $(S^1\times Q_8)/\Z/2$ described in Remark \ref{rem:homotopyabeliandoesnotimplycontractible} is an example of a homotopy abelian compact Lie group which is not abelian. It also illustrates that the theorem of Araki, James and Thomas \cite{AJTh} used in the proof of Proposition \ref{prop:homotopyabelian} fails to hold for disconnected groups.
\end{remark}

Another statement that will enter into the proof of our main result is the following.

\begin{proposition}\label{prop:pi4E2G}
Let $G$ be a compact Lie group. If $\pi_4(E(2,G))=0$, then the component of the identity $G_0\subseteq G$ is abelian, hence $G$ is an extension of $\pi_0(G)$ by a torus. 
\end{proposition}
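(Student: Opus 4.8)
The plan is to show the contrapositive: if $G_0$ is non-abelian, then $\pi_4(E(2,G))\neq 0$. The first step is to reduce to the connected case by relating $E(2,G)$ and $E(2,G_0)$. Since $G_0$ is open in $G$, the inclusion $G_0\hookrightarrow G$ induces a map $E(2,G_0)\to E(2,G)$, and because affine commutativity of a set $\{g_0,\dots,g_n\}$ only constrains the \emph{differences} $g_{i-1}^{-1}g_i$, the component of $E_\bullet(2,G)$ containing the basepoint is exactly $E_\bullet(2,G_0)$ (every $n$-simplex through the basepoint has all differences lying in $G_0$). Thus the basepoint component of $E(2,G)$ is $E(2,G_0)$, and so $\pi_i(E(2,G))\cong\pi_i(E(2,G_0))$ for all $i\geqslant 1$. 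It therefore suffices to prove: if $G_0$ is a non-abelian compact connected Lie group, then $\pi_4(E(2,G_0))\neq 0$.

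Now assume $G$ is compact, connected, and non-abelian, and I would exhibit a nonzero element in $\pi_4$. The natural source of $\pi_4$-classes is the skeletal filtration together with the commutator map. Recall from the proof of Proposition~\ref{prop:cGLie}(\ref{item:cm-null-or-EcomG-2d+1-conn}) that $F_1\fat{E_\bullet(2,G)}\cong G\ast G$ sits inside $E(2,G)$, and that the commutator map fits into diagram~(\ref{cdiagram}), so that the composite $G\ast G\to E(2,G)\xrightarrow{\cm}B[G,G]$ agrees up to homotopy with $\Sigma\tilde c\co \Sigma(G\wedge G)\to \Sigma[G,G]\to B[G,G]$. Since $G$ is connected and non-abelian, it is not homotopy abelian by the Araki--James--Thomas theorem (used in Proposition~\ref{prop:homotopyabelian}), so $\tilde c$, and hence $\Sigma\tilde c$, is essential. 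The idea is to find a map $S^4\to G\ast G$ whose image under $\Sigma\tilde c$ is nontrivial in $\pi_4(B[G,G])$, and argue that its image in $\pi_4(E(2,G))$ cannot vanish because $\cm$ detects it.

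To make this concrete I would use the low-dimensional topology of a connected non-abelian $G$: such a $G$ contains a subgroup locally isomorphic to $SU(2)$, so there is an essential map $S^3\to [G,G]$, giving a nonzero class in $\pi_3([G,G])=\pi_4(B[G,G])\cong\Z$. The join $G\ast G$ contains $S^1\ast S^1\cong S^3$ from any pair of circles, and more to the point the smash $G\wedge G$ receives the Samelson-type class realizing $\tilde c$; suspending once lands in $\pi_4$. The key computation is to check that the commutator pairing $\tilde c$ is nonzero on the relevant $\pi_4$ after suspension, equivalently that the Samelson product associated to two appropriately chosen spheres in $G$ is nontrivial. Granting that, any class in $\pi_4(E(2,G))$ mapping onto a generator of $\pi_4(B[G,G])\cong\Z$ under $\cm_\ast$ must be nonzero, forcing $\pi_4(E(2,G))\neq 0$.

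The main obstacle I anticipate is the last verification: controlling $\cm_\ast$ on $\pi_4$ and ensuring the chosen $4$-sphere is not killed in $E(2,G)$. The filtration map $G\ast G\to E(2,G)$ need not be injective on homotopy, so I cannot simply pull back a generator; instead I must produce a genuine class in $\pi_4(E(2,G))$ and evaluate $\cm$ on it, which requires knowing that $\Sigma\tilde c$ factors through $E(2,G)$ in a way compatible with a nonzero $\pi_4$-class. Concretely, the hard part is showing that the essential suspended commutator $\Sigma\tilde c$ remains detectable after composing $G\ast G\to E(2,G)$, i.e.\ that no cancellation occurs in passing from the $1$-skeleton to the full realization. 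I expect this to hinge on a careful analysis of the attaching of the $F_2$-stratum relative to $F_1=G\ast G$, using properness of $E_\bullet(2,G)$ and the explicit form of $\cm$ on $2$-simplices from Lemma~\ref{lem:afftrip}.
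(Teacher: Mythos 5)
Your overall logic---produce a map $S^4\to E(2,G)$ and show it is essential because $\cm$ detects it---is the right shape, and you are correct that detection by $\cm$ makes injectivity of $\pi_4(G\ast G)\to\pi_4(E(2,G))$ irrelevant (your final paragraph worries about a non-issue). But two steps fail. First, the reduction to the connected case is false: the realization $E(2,G)$ is path-connected, since $E_0(2,G)=G$ and every vertex $g$ is joined to the vertex $1$ by the $1$-simplex $(1,g)$, whose ``difference'' is $g$ itself and need not lie in $G_0$. So the basepoint component of $E(2,G)$ is all of $E(2,G)$, not $E(2,G_0)$; for a non-abelian finite group one has $E(2,G_0)=\ast$ while $\pi_1(E(2,G))\neq 0$. (This error is not fatal to the proposition---the paper never reduces to connected $G$, it works with $[G,G]_0$ directly---but the step as written is wrong.)

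Second, and fatally, the class you want does not exist in $\pi_4(G\ast G)$. Already for $G=SU(2)$ one has $G\ast G\simeq\Sigma(G\wedge G)=\Sigma(S^3\wedge S^3)=S^7$, so $\pi_4(G\ast G)=0$ and there is no essential map $S^4\to G\ast G$ at all; since your general argument routes through a subgroup locally isomorphic to $SU(2)$, this is decisive. More generally, for connected $G$ the space $G\wedge G$ is simply connected, so by Freudenthal every element of $\pi_4(\Sigma(G\wedge G))$ is a suspension of a class in $\pi_3(G\wedge G)$, and the candidate Samelson products $S^p\wedge S^q\to G\wedge G$ with $p+q=3$ all involve a factor from $\pi_0(G)=0$ or $\pi_2(G)=0$. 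The paper circumvents exactly this obstruction by first collapsing the contractible subspace $|C_{\bullet+1}(G)|$ to pass to the reduced model $\bar{E}(2,G)$, whose $1$-skeleton is $\Sigma(G^2/C_2(G))$ rather than $\Sigma(G\wedge G)$; only after collapsing the commuting pairs does the $4$-sphere appear. Concretely, for $SU(2)$ the Akbulut--McCarthy bundle structure of $c$ over $SU(2)-\{1\}$ identifies $\Sigma\bigl(SU(2)^2/C_2(SU(2))\bigr)$ with $S^4\wedge c^{-1}(-1)_+$, which retracts onto $S^4$ and splits $\mathfrak{c}|$ (Lemma \ref{lem:sectionloopsE2SU2}); the general case then follows not from a Samelson product computation but from Bott--Samelson (a homomorphism $SU(2)\to[G,G]_0$ injective on $\pi_3$) together with naturality of that section. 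Your proposal is missing both the passage to $\bar{E}(2,G)$ and the section construction, and these are the actual content of the proof.
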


The proof of the proposition requires some preparation. Let $C_n(G)\subseteq G^n$ denote the subspace of $n$-tuples of commuting elements in $G$.

\begin{lemma}\label{lem:reducedE2G}
The realization of the sub-simplicial space $C_{\bullet+1}(G)\subseteq E_\bullet(2,G)$ is contractible.
\end{lemma}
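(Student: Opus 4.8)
The plan is to exhibit an \emph{extra degeneracy} on $C_{\bullet+1}(G)$ and invoke the standard fact that a simplicial space admitting one has contractible realization. Writing $X_n:=C_{n+1}(G)\subseteq G^{n+1}$ for the space of $n$-simplices, the face maps $\partial_i$ delete the $i$-th coordinate and the degeneracies $s_i$ duplicate it, both inherited from $E_\bullet G$. I would define
\[
s_{-1}\co X_n\to X_{n+1}\,,\qquad (g_0,\dots,g_n)\mapsto (1,g_0,\dots,g_n)\,.
\]
The only place where the defining condition of $C_{\bullet+1}(G)$ enters is the well-definedness of $s_{-1}$: prepending the identity to a tuple of pairwise commuting elements again yields a tuple of pairwise commuting elements, since $1$ is central. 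Thus $s_{-1}$ indeed lands in $C_{n+2}(G)$.

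The next step is to check the extra-degeneracy identities $\partial_0 s_{-1}=\mathrm{id}$, $\partial_i s_{-1}=s_{-1}\partial_{i-1}$ for $i\geqslant 1$, and $s_i s_{-1}=s_{-1}s_{i-1}$ for $i\geqslant 0$. Each is a routine check tracking the position of the prepended $1$ relative to the deleted or duplicated coordinate; for instance $\partial_0 s_{-1}(g_0,\dots,g_n)=\partial_0(1,g_0,\dots,g_n)=(g_0,\dots,g_n)$ gives the first identity. Together these identities say that the collection $\{s_{-1}\}$ assembles into a simplicial contracting homotopy from the identity of $C_{\bullet+1}(G)$ to the constant map at the vertex $1\in C_1(G)=X_0$.

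Finally I would conclude contractibility of the realization: geometric realization preserves finite products of simplicial spaces (working in compactly generated spaces, so that $|X_\bullet\times\Delta[1]|\cong|X_\bullet|\times[0,1]$), whence the simplicial contraction realizes to an honest contraction of $|C_{\bullet+1}(G)|$ onto the point $1$; notably no properness hypothesis is needed for this. Conceptually this is the observation that $C_{\bullet+1}(G)$ is the d\'ecalage of the simplicial space of commuting tuples, exactly as $E_\bullet G$ is the d\'ecalage of $B_\bullet G$, and d\'ecalages are always contractible; I would cite \cite{M72} for the extra-degeneracy criterion. I do not expect a genuine obstacle here---the entire content is the centrality of $1$---and the only point demanding a little care is keeping the indexing straight while verifying the simplicial identities.
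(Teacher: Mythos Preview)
Your proposal is correct and is essentially the same argument as the paper's own proof: both prepend the identity element to obtain an extra degeneracy $s_{-1}(g_0,\dots,g_n)=(1,g_0,\dots,g_n)$, observe that this preserves pairwise commutativity, and conclude contractibility by the standard extra-degeneracy criterion. The paper's version is terser (it simply says this is the same extra degeneracy that contracts $E_\bullet G$), while you spell out the simplicial identities and add the d\'ecalage remark, but there is no substantive difference.
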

\begin{proof}
  Implicit in the statement is the claim that $C_{\bullet+1}(G)$ is a sub-simplicial space of $E_\bullet(2,G)$. Since in $E_\bullet(2,G)$ the faces and degeneracies delete and duplicate coordinates (as they do in the simplicial model of $EG$ described in Section \ref{sec:affinelycommuting}) it is easy to check that $C_{\bullet+1}(G)$ is indeed a sub-simplicial space.

  To prove it is contractible we can straightforwardly adapt one of the usual proofs that $EG$ is contractible: the simplicial model of $EG$ can be augmented by adding a unique $(-1)$-simplex and this augmented simplicial space has an extra degeneracy given by $s_{-1}(g_0, \ldots, g_n) = (1, g_0, \ldots, g_n)$ for any $n \ge -1$. This extra degeneracy preserves $C_{\bullet+1}(G)$ and thus also shows that its geometric realization is contactible.
\end{proof}

We now define a homotopy equivalent model for $E(2,G)$ which will turn out convenient. Consider the simplicial space $\bar{E}_{\bullet}(2,G)$ with $n$-simplices
\[
\bar{E}_{n}(2,G):=E_n(2,G)/C_{n+1}(G)
\]
and simplicial structure the one induced by $E_{\bullet}(2,G)$.

As $C_{\bullet+1}(G)\to E_\bullet(2,G)$ is a levelwise cofibration of good simplicial spaces the map of realizations $|C_{\bullet+1}(G)|\to E(2,G)$ is a cofibration. By Lemma \ref{lem:reducedE2G} $|C_{\bullet+1}(G)|$ is contractible, so the map $E(2,G)\to  E(2,G)/|C_{\bullet+1}(G)|$ is a homotopy equivalence. Since geometric realization commutes with taking cofibers, the levelwise quotient maps induce a homotopy equivalence
\[
E(2,G)\xrightarrow{\simeq} \bar{E}(2,G)\, .
\]
Just like $E(2,G)$, the assignment $G\mapsto \bar{E}(2,G)$ is natural for homomorphisms of groups, and so is the equivalence $E(2,G)\simeq \bar{E}(2,G)$.

\begin{remark}
In the introduction we mentioned the space $E(2,G)_{\mathds{1}}$, which is the geometric realization of the sub-simplicial space $E_{\bullet}(2,G)_{\mathds{1}}\subseteq E_{\bullet}(2,G)$ consisting of the connected component of $(1, \ldots, 1)$ in each degree. This space also has a homotopy equivalent model obtained as above by setting $\bar{E}_n(2,G)_{\mathds{1}}:=E_n(2,G)_{\mathds{1}}/C_{n+1}(G)_{\mathds{1}}$. Indeed, the extra degeneracy used in the proof of Lemma \ref{lem:reducedE2G} preserves the sub-simplicial space $C_{\bullet+1}(G)_{\mathds{1}}$ consisting in degree $n$ of the connected component of $C_{n+1}(G)$ containing $(1,\ldots,1)$.
\end{remark}

The commutator map $\mathfrak{c}\co E(2,G)\to B[G,G]$ factors through $\bar{E}(2,G)$. To keep the notation simple we denote the resulting map $\mathfrak{c}\co \bar{E}(2,G)\to B[G,G]$ by the same letter. Observe that $\bar{E}(2,G)$ is a reduced simplicial space, and the space of $1$-simplices is $G^2/C_2(G)$. Therefore, the simplicial $1$-skeleton is $\Sigma G^2/C_2(G)$ and the commutator map restricted to the $1$-skeleton
\[
\mathfrak{c}|\co \Sigma G^2/C_2(G)\to \Sigma [G,G]
\]
is simply the suspension of the map induced by the algebraic commutator map $c\co G^2\to [G,G]\subset G$.

\begin{lemma}\label{lem:sectionloopsE2SU2}
After looping the commutator map $\mathfrak{c}\co \bar{E}(2,SU(2))\to BSU(2)$ has a section up to homotopy, and this section $s\co SU(2)\to \Omega \bar{E}(2,SU(2))$ is natural with respect to homomorphisms $f\co SU(2)\to G$ in the sense that the diagram
\[
\xymatrix{
\Omega \bar{E}(2,SU(2)) \ar[r]^-{\Omega \bar{E}(2,f)} \ar[d]^-{\Omega \mathfrak{c}} & \Omega \bar{E}(2,G) \ar[d]^-{\Omega\mathfrak{c}} \\
SU(2) \ar@/^1.0pc/@{..>}[u]^-{s} \ar[r]^-{f} & [G,G]
}
\]
with the dotted arrow filled in commutes up to homotopy.
\end{lemma}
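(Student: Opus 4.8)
The plan is to reduce the existence of the section to a single concrete question about the degree of a commutator family, and then to obtain naturality for free. Since $SU(2)$ is connected and semisimple it is perfect, so $[SU(2),SU(2)]=SU(2)$, the target of $\mathfrak{c}$ really is $BSU(2)$, and $\Omega BSU(2)\simeq SU(2)\cong S^3$. Because $S^3$ is a sphere, a based map $s\co S^3\to\Omega\bar{E}(2,SU(2))$ is the same datum as an element $\sigma\in\pi_3(\Omega\bar{E}(2,SU(2)))\cong\pi_4(\bar{E}(2,SU(2)))$, and the requirement $\Omega\mathfrak{c}\circ s\simeq\mathrm{id}_{S^3}$ says precisely that $\mathfrak{c}_\ast(\sigma)$ is a generator of $\pi_4(BSU(2))\cong\Z$. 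So the whole lemma comes down to showing that $\mathfrak{c}_\ast\co\pi_4(\bar{E}(2,SU(2)))\to\pi_4(BSU(2))\cong\Z$ hits a generator.

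To produce $\sigma$ I would work in the simplicial $1$-skeleton. As recorded just before the lemma, $\bar{E}(2,SU(2))$ is reduced with space of $1$-simplices $SU(2)^2/C_2(SU(2))$, its $1$-skeleton is $\Sigma(SU(2)^2/C_2(SU(2)))$, and $\mathfrak{c}$ restricts there to $\Sigma\bar{c}$, the suspension of the map $\bar{c}\co SU(2)^2/C_2(SU(2))\to SU(2)$ induced by the algebraic commutator. The inclusion of the $1$-skeleton $\Sigma SU(2)\hookrightarrow BSU(2)$ represents the bottom cell and is an isomorphism on $\pi_4$. Hence it suffices to find $\beta\in\pi_3(SU(2)^2/C_2(SU(2)))$ with $\bar{c}_\ast(\beta)$ a generator of $\pi_3(SU(2))\cong\Z$: then $\Sigma\beta$ maps under $\Sigma\bar c$ to $\Sigma(\bar c_\ast\beta)$, a generator of $\pi_4(\Sigma SU(2))$, and pushing $\Sigma\beta$ into $\bar{E}(2,SU(2))$ produces the required $\sigma$.

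The hard part is constructing $\beta$, i.e. lifting $\mathrm{id}_{S^3}$ through $\bar{c}$ up to degree one, and this is where I expect the main difficulty. It cannot be done inside $SU(2)^2$: the algebraic commutator $c\co SU(2)^2\to SU(2)$ is constant on both axes $SU(2)\times\{1\}$ and $\{1\}\times SU(2)$, hence zero on $H_3(SU(2)^2)\cong\Z^2$, so any class pulled back from $SU(2)^2$ through the quotient $SU(2)^2\to SU(2)^2/C_2(SU(2))$ dies under $\bar{c}$. The degree-one class must therefore be created by collapsing the commuting variety $C_2(SU(2))=\mathrm{Hom}(\Z^2,SU(2))$, and it originates in $H_2(C_2(SU(2)))$ via the connecting homomorphism of the pair $(SU(2)^2,C_2(SU(2)))$. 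Geometrically I would build it from a two-sphere of commuting pairs, suspended along a path of non-commuting pairs using the relation $[t,w]=t^2$ for a maximal torus element $t$ and a Weyl representative $w$. A careful sign/degree count is needed to guarantee that the resulting degree is exactly $\pm1$ and not a higher multiple — the subtlety being that the torus relation $[t,w]=t^2$ by itself only realizes the degree-two squaring map, so the collapse of $C_2(SU(2))$ is precisely what is needed to bring the degree down to one. I expect this to be the main obstacle, and would settle it cleanly by invoking \cite{lowdim}: there $E(2,SU(2))\simeq S^4\vee\Sigma^4\RP^2$, which already gives $\pi_4(\bar{E}(2,SU(2)))\cong\Z$ and reduces the claim to checking that $\mathfrak{c}$ has degree $\pm1$ on the bottom cell $S^4$, a computation one can carry out on $H_4$ from the cofibre sequence of $(SU(2)^2,C_2(SU(2)))$ together with the known homology of $C_2(SU(2))$, which is concentrated in degrees $2$ and $3$.

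Finally, naturality is formal. The solid square in the statement is obtained by applying $\Omega$ to the naturality square of the simplicial map $\mathfrak{c}$ along $f\co SU(2)\to G$; under the identifications $\Omega BSU(2)\simeq SU(2)$, $\Omega B[G,G]\simeq[G,G]$ and $\Omega Bf\simeq f$ the bottom map becomes the restriction of $f$, and the square commutes up to homotopy because $\mathfrak{c}$ is natural in the group. Combined with the section identity this yields $\Omega\mathfrak{c}\circ\Omega\bar{E}(2,f)\circ s\simeq f\circ\Omega\mathfrak{c}\circ s\simeq f$, which is exactly the assertion that the diagram commutes with the dotted arrow $s$ filled in. No property of $s$ beyond being a section is used.
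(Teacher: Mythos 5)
Your reduction of the lemma to a single degree-one statement is sound, and your treatment of naturality is correct: once $s$ is any map with $\Omega\mathfrak{c}\circ s\simeq \mathrm{id}$, the homotopy commutativity of the diagram follows formally from the naturality of $\mathfrak{c}$ in $G$ (using that $SU(2)$ is perfect, so $f$ lands in $[G,G]$); this matches what the paper does. The reduction to the simplicial $1$-skeleton $\Sigma\bigl(SU(2)^2/C_2(SU(2))\bigr)$ is also the paper's strategy. The problem is that the one step you yourself flag as ``the main obstacle'' --- producing $\beta\in\pi_3\bigl(SU(2)^2/C_2(SU(2))\bigr)$ with $\bar{c}_\ast(\beta)$ a \emph{generator} of $\pi_3(SU(2))$, rather than twice a generator --- is never actually established. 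The geometric sketch via $[t,w]=t^2$ leaves the degree ambiguous, as you admit. The fallback via \cite{lowdim} gives $\pi_4(E(2,SU(2)))\cong\Z$, but that alone is compatible with $\mathfrak{c}_\ast$ being multiplication by $2$ on $\pi_4$; and the proposed $H_4$ computation ``from the cofibre sequence of $(SU(2)^2,C_2(SU(2)))$'' requires identifying the generator of $H_3\bigl(SU(2)^2/C_2(SU(2))\bigr)$ and evaluating $\bar{c}_\ast$ on it, which is essentially a restatement of the original problem rather than a solution to it. So the crux of the lemma remains open in your write-up.

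The paper closes exactly this gap with a specific geometric input that your argument lacks: by \cite[Section VI 1(a)]{AkbulutMcCarthy}, the restriction $c|\co SU(2)^2-C_2(SU(2))\to SU(2)-\{1\}$ is a locally trivial bundle with fibre $c^{-1}(-1)$. Since the base is contractible the bundle is trivial, so on one-point compactifications $\bar{c}=(c|)^+$ is identified with the projection $SU(2)\wedge c^{-1}(-1)_+\to SU(2)$, and a choice of point in $c^{-1}(-1)$ gives an honest section of $\bar{c}$ --- no degree count needed, and in fact this shows $\bar{c}$ splits before suspending or passing to homotopy groups. I would recommend either importing this fibration argument, or, if you insist on the homological route, actually carrying out the identification $SU(2)^2/C_2(SU(2))\cong SU(2)\wedge c^{-1}(-1)_+$ (with $c^{-1}(-1)\cong V_2(\R^3)$) so that the degree-one claim becomes checkable. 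As written, the proposal correctly locates the difficulty but does not resolve it.
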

\begin{proof}
In the diagram we have implicitly used the canonical homotopy equivalence $[G,G]\simeq \Omega B [G,G]$ adjoint to the inclusion $\Sigma [G,G]\to B[G,G]$. By adjunction it is enough to construct a map $s'\co \Sigma SU(2)\to \bar{E}(2,SU(2))$ making the following diagram commute:
\[
\xymatrix{
& \bar{E}(2,SU(2)) \ar[r]^-{\bar{E}(2,f)} \ar[d]^-{\mathfrak{c}} & \bar{E}(2,G) \ar[d]^-{\mathfrak{c}} \\
\Sigma SU(2) \ar[r]^-{\text{incl}} \ar@{..>}[ur]^-{s'} & BSU(2)\ar[r]^-{Bf} & B[G,G]
}
\] 
The desired section $s$ may then be defined as the adjunct of $s'$. As the simplicial $1$-skeleton of $\bar{E}(2,SU(2))$ is $\Sigma SU(2)^2/C_2(SU(2))$ it suffices to construct a section of the map
\[
\mathfrak{c}|\co \Sigma SU(2)^2/C_2(SU(2)) \to \Sigma SU(2)\,,
\]
and $s'$ may be defined as the composite of this section with the inclusion into $\bar{E}(2,SU(2))$.

It is shown in \cite[Section VI 1(a)]{AkbulutMcCarthy} that the restriction of the algebraic commutator map to the non-commuting pairs in $SU(2)$,
\[
c|\colon SU(2)^2-C_2(SU(2))\to SU(2)-\{1\}\,,
\]
is a locally trivial bundle with fiber $c^{-1}(-1)$. Note that $\mathfrak{c}|=\Sigma (c|)^+$, where $(c|)^+$ is the map induced by $c|$ on one-point compactifications. As $SU(2)-\{1\}$ is contractible there is a homeomorphism of the total space of the fiber bundle with $(SU(2)-\{1\})\times c^{-1}(-1)$ under which $c|$ corresponds to the projection onto the first factor. Now $c^{-1}(-1)$ is compact, since it is a closed subset of the compact space $SU(2)^2$. Thus, there is a homeomorphism
\[
[(SU(2)-\{1\})\times c^{-1}(-1)]^+ \cong SU(2)\wedge c^{-1}(-1)_+\,,
\]
where $c^{-1}(-1)_+$ denotes $c^{-1}(-1)$ with a disjoint basepoint added. Under this homeomorphism $(c|)^+$ can be identified with the map $SU(2)\wedge c^{-1}(-1)_+\to SU(2)$ induced by the projection $c^{-1}(-1)_+\to S^0$. A choice of basepoint of $c^{-1}(-1)$ gives a section $SU(2)\wedge S^0\to  SU(2)\wedge c^{-1}(-1)_+$, and its suspension yields a section for $\mathfrak{c}|$.
\end{proof}

\begin{proof}[Proof of Proposition \ref{prop:pi4E2G}]
Let $G_0$ denote the component of the identity of $G$. We must show that $G_0$ is abelian. Clearly, this follows if we can show that $[G,G]_0$ is abelian. For $[G_0,G_0]$ is a subgroup of $[G,G]_0$, and the commutator group of a connected compact Lie group is semisimple.

Thus, assume for contradiction that $[G,G]_0$ is non-abelian. It is well known that the universal cover of a compact connected Lie group $K$ decomposes as a product of simply--connected simple Lie groups $\{K_i\}_{i=1,\dots,k}$ and a copy of $\R^m$, giving $\pi_3(K)=\pi_3(K_1)\oplus\cdots\oplus\pi_3(K_k)$. For $K=[G,G]_0$ we must have $k\geqslant 1$, since $[G,G]_0$ is assumed non-abelian. In \cite[Chapter III Proposition 10.2]{BottSamelson} it is shown that in a simply--connected simple Lie group $K_i$ one can find a subgroup isomorphic to $SU(2)$ such that the inclusion $SU(2)\to K_i$ induces an isomorphism in $\pi_3(-)$. Thus we find a homomorphism $f\co SU(2) \to [G,G]_0$ such that $\pi_3(f)\co \pi_3(SU(2))\to \pi_3([G,G]_0)$ is injective. To reach a contradiction it suffices to show that the map $\pi_3(SU(2))\to \pi_3([G,G])$ obtained by composition with the inclusion $[G,G]_0\subseteq [G,G]$ is zero.

Application of $\pi_3(-)$ to the homotopy commutative diagram in Lemma \ref{lem:sectionloopsE2SU2} yields a commutative diagram
\[
\xymatrix{
\pi_3(\Omega\bar{E}(2,SU(2))) \ar[r]^-{\pi_3(f')} & \pi_3(\Omega\bar{E}(2,G)) \ar[d]^-{\pi_3(\Omega\mathfrak{c})} \\
\pi_3(SU(2)) \ar[u]^-{\pi_3(s)} \ar[r]^-{\pi_3(f)} & \pi_3([G,G])
}
\]
where $f':=\Omega \bar{E}(2,f)$. Since $E(2,G)$ is homotopy equivalent with $\bar{E}(2,G)$, we have that
\[
\pi_3(\Omega \bar{E}(2,G))\cong  \pi_4(\bar{E}(2,G))\cong \pi_4(E(2,G))\,.
\]
By assumption this group is zero, hence $\pi_3(f)=0$. 
\end{proof}

The final item needed to prove Theorem \ref{thm:main} is the following proposition.

\begin{proposition} \label{prop:G0abelian2conn}
Let $G$ be a compact Lie group and assume that the component of the identity $G_0$ is abelian. If $E(2,G)$ is $2$-connected, then $\mathfrak{c}$ is null-homotopic.
\end{proposition}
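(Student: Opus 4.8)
The plan is to exploit the hypothesis that $G_0$ is abelian together with Proposition~\ref{prop:cGLie} to show that the target $B[G,G]$ is an Eilenberg--Mac Lane space concentrated in degree $2$, after which null-homotopy of $\cm$ becomes a one-line cohomological computation.

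First I would use the connectivity hypothesis to pin down $[G,G]$. Since $E(2,G)$ is $2$-connected we have $\pi_1(E(2,G))=0$, so the homomorphism $\cm_\ast$ on fundamental groups has trivial domain and is in particular the trivial map. By part (\ref{item:cm-0-on-pi1}) of Proposition~\ref{prop:cGLie} — equivalently, by the surjectivity of $\cm_\ast$ asserted there — this forces $\pi_1(B[G,G])\cong\pi_0([G,G])$ to vanish, i.e.\ $[G,G]$ is a connected Lie group. Being connected, $[G,G]$ is contained in the identity component $G_0$; and $G_0$ is assumed abelian, hence a torus. A connected (closed) subgroup of a torus is again a torus, so $[G,G]$ is itself a torus, say of rank $r$.

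Consequently $B[G,G]\simeq K(\Z^r,2)$, so that homotopy classes of maps $E(2,G)\to B[G,G]$ are in natural bijection with $H^2(E(2,G);\Z^r)$. Since $E(2,G)$ is $2$-connected, the Hurewicz theorem gives $H_1(E(2,G))=H_2(E(2,G))=0$, and the universal coefficient theorem then yields $H^2(E(2,G);\Z^r)=0$. Hence every map $E(2,G)\to B[G,G]$ is null-homotopic; in particular $\cm$ is.

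I do not expect a serious obstacle here: the entire content lies in recognizing that the two hypotheses conspire to make $B[G,G]$ a degree-$2$ Eilenberg--Mac Lane space. The one step deserving care is the deduction that $[G,G]$ is a torus, which genuinely uses both inputs — the vanishing of $\cm_\ast$ from Proposition~\ref{prop:cGLie} to get connectedness, and the abelianness of $G_0$ to upgrade connectedness to being a torus. Once $B[G,G]\simeq K(\Z^r,2)$ is established, the hypothesis that $E(2,G)$ is $2$-connected is exactly what is needed to kill $H^2$ and conclude.
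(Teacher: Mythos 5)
Your proof is correct and follows essentially the same route as the paper: use $\pi_1(E(2,G))=0$ together with Proposition~\ref{prop:cGLie}(\ref{item:cm-0-on-pi1}) to conclude $[G,G]$ is connected, hence a torus inside the abelian $G_0$, so that $B[G,G]\simeq K(\Z^r,2)$ and the obstruction lives in $H^2(E(2,G);\Z^r)=0$. No gaps.
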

\begin{proof}
Since $\pi_1(E(2,G))=0$ by assumption, we deduce from Proposition \ref{prop:cGLie} part (\ref{item:cm-0-on-pi1}) that $[G,G]$ is connected. Then $[G,G]\subseteq G_0$, and since $[G,G]$ is also closed it is a torus. Therefore, $B[G,G]$ is an Eilenberg-MacLane space of type $K(\Z^r,2)$ for some $r\geqslant 0$, and the homotopy class of the commutator map
\[
\cm\co E(2,G)\to B[G,G]\simeq K(\Z^r,2)
\]
corresponds to a cohomology class in $H^2(E(2,G);\Z^r)$. Since $E(2,G)$ is assumed $2$-connected, we have that $H^2(E(2,G);\Z^r)=0$. Hence $\cm$ is null-homotopic, as desired. 
\end{proof}

We can now prove the main result of this paper.

\begingroup
\def\thetheorem{\ref{thm:main}}
\begin{theorem}
For a compact Lie group $G$ the following assertions are equivalent:
\begin{enumerate}[(1)]
\item\label{item:G-ab} $G$ is abelian
\item\label{item:EcomG-contractible} $E(2,G)$ is contractible
\item\label{item:cm-null} $\mathfrak{c}$ is null-homotopic
\item\label{item:Ecomg-highly-conn} $\pi_i(E(2,G))=0$ for $i=1,2,4$.
\end{enumerate}
\end{theorem}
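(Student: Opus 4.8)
The plan is to prove the cycle of implications
$(\ref{item:G-ab})\Rightarrow(\ref{item:EcomG-contractible})\Rightarrow(\ref{item:Ecomg-highly-conn})\Rightarrow(\ref{item:cm-null})\Rightarrow(\ref{item:G-ab})$,
which closes the loop and establishes all four equivalences at once. Three of the four arrows are either immediate or are delivered directly by the propositions proved above, so the real work is concentrated in the final one. For $(\ref{item:G-ab})\Rightarrow(\ref{item:EcomG-contractible})$ I would note that if $G$ is abelian then \emph{every} finite subset of $G$ is affinely commutative by Lemma \ref{lem:aff}, so that $E_\bullet(2,G)=E_\bullet G$ and therefore $E(2,G)=|E_\bullet G|=EG$ is contractible. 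The implication $(\ref{item:EcomG-contractible})\Rightarrow(\ref{item:Ecomg-highly-conn})$ is trivial, since a contractible space has vanishing homotopy groups in every degree.

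Next I would dispatch $(\ref{item:Ecomg-highly-conn})\Rightarrow(\ref{item:cm-null})$ by assembling the two main propositions. From $\pi_4(E(2,G))=0$, Proposition \ref{prop:pi4E2G} gives that the identity component $G_0$ is abelian. Since $E(2,G)$ is path-connected -- every vertex $g$ is joined to the basepoint by the $1$-simplex $(1,g)$, which is affinely commutative -- the vanishing of $\pi_1$ and $\pi_2$ says precisely that $E(2,G)$ is $2$-connected. Proposition \ref{prop:G0abelian2conn} then concludes that $\mathfrak{c}$ is null-homotopic.

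The crux is $(\ref{item:cm-null})\Rightarrow(\ref{item:G-ab})$. Assuming $\mathfrak{c}$ null-homotopic, Proposition \ref{prop:cGLie}(\ref{item:cm-null-or-EcomG-2d+1-conn}) shows $G$ is homotopy abelian, and Proposition \ref{prop:homotopyabelian} then exhibits $G$ as a central extension of the finite abelian group $\pi_0(G)$ by the torus $T=G_0$, classified by a cocycle $\omega$. Because the extension is central, every commutator lies in the central torus $T$, so the commutator map descends to a bihomomorphism $b\co \pi_0(G)\times\pi_0(G)\to T$ given by $b(x,y)=\omega(x,y)\omega(y,x)^{-1}$. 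If $m$ is the exponent of $\pi_0(G)$, bimultiplicativity gives $b(x,y)^m=b(x^m,y)=1$, so the image of $b$ lands in the finite $m$-torsion subgroup $T[m]$. Hence the algebraic commutator subgroup -- and therefore its closure $[G,G]$ -- is \emph{finite}. On the other hand, $\mathfrak{c}$ null-homotopic forces $\mathfrak{c}_\ast=0$, so Proposition \ref{prop:cGLie}(\ref{item:cm-0-on-pi1}) shows $[G,G]$ is \emph{connected}. A group that is at once finite and connected is trivial, whence $[G,G]=1$ and $G$ is abelian.

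I expect the last implication to be the main obstacle, and within it the decisive observation is that the commutator pairing on the finite component group necessarily has finite image. This is exactly what excludes the homotopy-abelian-but-nonabelian groups such as $(S^1\times Q_8)/(\Z/2)$ of Remark \ref{rem:homotopyabeliandoesnotimplycontractible}: there $[G,G]$ is finite but \emph{disconnected}, so $\mathfrak{c}_\ast\neq 0$ and $\mathfrak{c}$ fails to be null-homotopic, consistent with non-commutativity. Everything else reduces to bookkeeping on top of the propositions already in hand.
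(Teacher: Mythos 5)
Your proposal is correct and follows essentially the same route as the paper: the same cycle of implications, driven by Propositions \ref{prop:cGLie}, \ref{prop:homotopyabelian}, \ref{prop:pi4E2G} and \ref{prop:G0abelian2conn}, with the only work left being to show $[G,G]=1$ for a central extension of a finite abelian group by a torus with connected commutator subgroup. The sole (cosmetic) difference is in that last step: you kill $[G,G]$ by observing the commutator pairing is bimultiplicative with $m$-torsion image, so $[G,G]$ is finite and connected, whereas the paper notes it is a finitely generated torus; both conclude triviality immediately.
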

\endgroup

\begin{proof}
Clearly, if $G$ is abelian, then $E(2,G)$ is contractible, because in this case every subset $\{g_0,\dots,g_n\}\subseteq G$ is affinely commutative, so $E(2,G)=EG$ and $EG$ is contractible. If $E(2,G)$ is contractible, then it is obvious that $\cm$ is null-homotopic, and that $\pi_i(E(2,G))=0$ for $i=1,2$ and $4$. To establish the theorem we shall prove that (\ref{item:cm-null}) $\implies$ (\ref{item:G-ab}), and (\ref{item:Ecomg-highly-conn}) $\implies$ (\ref{item:cm-null}).

Suppose that $\cm$ is null-homotopic. Then Proposition \ref{prop:cGLie} part (\ref{item:cm-null-or-EcomG-2d+1-conn}) and Proposition \ref{prop:homotopyabelian} imply that $G$ is a central extension of $\pi_0(G)$ by a torus. In addition the map $\mathfrak{c}_\ast$ of fundamental groups is trivial, hence $[G,G]$ is a connected Lie group by Proposition \ref{prop:cGLie} part (\ref{item:cm-0-on-pi1}). Then it is a subgroup of $G_0=T$, hence a torus. It will suffice to show that $[G,G]$ is finitely generated, since a torus is finitely generated only if it is the trivial group. But as pointed out in the proof of Proposition \ref{prop:homotopyabelian}, $[G,G]$ is generated by the image of the map
\begin{alignat*}{3}
&& \pi_0(G)\times \pi_0(G)  \to &\; T \\
&& (x,y) \mapsto &\; \omega(x,y)\omega(y,x)^{-1}\,,
\end{alignat*}
which is finite, since $\pi_0(G)$ is finite. This shows that (\ref{item:cm-null}) $\implies$ (\ref{item:G-ab}).

We will now show that (\ref{item:Ecomg-highly-conn}) $\implies$ (\ref{item:cm-null}). By assumption $\pi_4(E(2,G))=0$, so Proposition \ref{prop:pi4E2G} implies that the identity component $G_0\subseteq G$ is abelian. Proposition \ref{prop:G0abelian2conn} now finishes the proof.
\end{proof}

There is an intriguing relationship of $E(2,G)$ with bundle theory. In \cite{Ad1} it is explained how the $i$-th homotopy group of $E(2,G)$ can be interpreted as the set of ``transitionally commutative structures" on the trivial principal $G$-bundle over $S^i$. We refer to \cite{Ad1} for more background. If $G$ is non-abelian, then $\pi_i(E(2,G))\neq 0$ for some $i\in\{1,2,4\}$. Therefore, our theorem has the following corollary.

\begin{corollary}
Let $G$ be a non-abelian compact Lie group. Then the trivial principal $G$-bundle over at least one of $S^1$, $S^2$ or $S^4$ admits two distinct transitionally commutative structures.
\end{corollary}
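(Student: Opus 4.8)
The plan is to obtain this as an immediate consequence of Theorem \ref{thm:main} combined with the interpretation of the homotopy groups of $E(2,G)$ recalled just above. The logical heart of the argument is the contrapositive of the implication (\ref{item:G-ab}) $\Longrightarrow$ (\ref{item:Ecomg-highly-conn}): if $G$ fails to be abelian, then condition (\ref{item:Ecomg-highly-conn}) of Theorem \ref{thm:main} cannot hold, so there exists at least one index $i\in\{1,2,4\}$ with $\pi_i(E(2,G))\neq 0$.

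First I would fix such an index $i$. Then I would appeal to the identification described in \cite{Ad1}, under which the group $\pi_i(E(2,G))$ is in bijection with the set of transitionally commutative structures on the trivial principal $G$-bundle over $S^i$, the identity element corresponding to the standard trivial structure. The final step is the elementary observation that a nontrivial group has at least two elements: since $\pi_i(E(2,G))\neq 0$, at least one of its elements differs from the identity, and under the bijection this produces a transitionally commutative structure on the trivial bundle over $S^i$ that is distinct from the standard one. Hence the trivial bundle over $S^i$ admits at least two distinct such structures, as claimed.

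I do not anticipate any real obstacle here, since all of the substantive work is carried by Theorem \ref{thm:main}; the corollary is purely a matter of unwinding definitions. The one point deserving a line of care is that \emph{distinct} is to be read through the bijection of \cite{Ad1}, that is, two elements of $\pi_i(E(2,G))$ which differ as homotopy classes correspond to genuinely distinct structures. This holds precisely because that correspondence is a bijection, so no separate verification is needed.
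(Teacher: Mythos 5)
Your proposal is correct and matches the paper's own argument, which likewise takes the contrapositive of (\ref{item:G-ab}) $\Longrightarrow$ (\ref{item:Ecomg-highly-conn}) to conclude $\pi_i(E(2,G))\neq 0$ for some $i\in\{1,2,4\}$ and then invokes the identification from \cite{Ad1} of $\pi_i(E(2,G))$ with transitionally commutative structures on the trivial $G$-bundle over $S^i$. There is nothing to add.
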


\begin{remark} The compactness condition in Theorem \ref{thm:main} is necessary. For example, $SL(2,\R)$ is a homotopy abelian Lie group as it deformation retracts onto $SO(2)$, but it is not abelian. On the other hand, a result of Pettet and Suoto \cite[Corollary 1.2]{PS} implies that $E(2,SL(2,\R))\simeq E(2,SO(2))=ESO(2)$, which is contractible.
\end{remark}

\section{A potential splitting} \label{sec:final}

The results in this paper would be well explained by a splitting up to homotopy of the looped commutator map $\Omega \mathfrak{c}$, and hence a splitting of spaces
\[
\Omega E(2,G) \simeq [G,G]\times \Omega X\,,
\]
for some space $X$. Indeed, if such a splitting exists, then any of the equivalent conditions listed in Theorem \ref{thm:main} readily implies that $[G,G]=1$. Note that a connected and simply-connected compact Lie group with trivial $\pi_3$ is necessarily trivial.

The splitting exists for $G=SU(2)$ as proved in Lemma \ref{lem:sectionloopsE2SU2}. It also exists for $G=O(2)$ and for $G=SU$. For example, we proved in \cite[Theorem 1.5]{lowdim} that $E(2,O(2))\simeq \Sigma(S^1\times S^1)$. For any group $G$ there is a homotopy fiber sequence $G\ast G\to \Sigma G \to BG$ by Ganea's theorem. After looping, the unit map $G\to \Omega \Sigma G$ splits the homotopy fiber sequence, hence
\[
\Omega \Sigma G\simeq G \times \Omega (G\ast G)\,.
\]
In particular, there is a homotopy equivalence
\[
\Omega E(2,O(2)) \simeq S^1\times S^1 \times \Omega ((S^1\times S^1)\ast(S^1\times S^1))\, .
\]
Note that $S^1=SO(2)=[O(2),O(2)]$. To prove that $\Omega \mathfrak{c}\co \Omega E(2,O(2))\to SO(2)$ splits up to homotopy it is enough to show that $\Omega \mathfrak{c}$ is surjective on fundamental groups. Because the inclusion $SO(2)\to O(2)$ induces an isomorphism $\pi_2(BSO(2))\cong \pi_2(BO(2))$, one can equivalently show, using Remark \ref{rem:commutatorinverse}, that $i\phi^{-1}p\co E(2,O(2))\to BO(2)$ is surjective on $\pi_2$. Surjectivity follows from results in \cite{RV} as we will now explain. The authors construct a map $f_1\colon S^2\to B(2,O(2))$ such that $if_1$ is null-homotopic but $i\phi^{-1}f_1$ is a generator of $\pi_2(BO(2))\cong\Z$ (\cite[Proposition 3.5]{RV}). Thus we can find a lift of $f_1$ up to homotopy $\tilde{f}_1\colon S^2\to E(2,O(2))$ such that $i\phi^{-1}p\tilde{f}_1$ is a generator of $\pi_2(BO(2))$. 

We leave it to the reader to show that $\Omega \mathfrak{c}\co \Omega E(2,SU)\to SU$ has a splitting up to homotopy using \cite[Theorem 3.4]{G1} and Remark \ref{rem:commutatorinverse}.

There are too few examples known to build a firm opinion, but the results of this paper suggest that the following question warrants further study.

\begin{question}
Let $G$ be a compact Lie group. Does the commutator map
\[
\cm\co E(2,G)\to B[G,G]
\]
split up to homotopy after looping?
\end{question}

One way of establishing a splitting is by showing that the restriction $\mathfrak{c}|$ of the commutator map to the simplicial $1$-skeleton of $\bar{E}(2,G)$ has a splitting up to homotopy. This was carried out for $G=SU(2)$ in Lemma \ref{lem:sectionloopsE2SU2}. However, one can show that $\mathfrak{c}|$ splits neither for $G=O(2)$ nor for $G=SO(3)$. For example, for $G=SO(3)$ we have $H_1(SO(3);\Z) \cong \Z/2$ but one can compute that $H_1(SO(3)^2/C_2(SO(3));\Z) \cong \Z$ using \cite[Theorem 1.2]{TGS}. This motivates the following question.

\begin{question}
For which groups $G$ does the commutator map
\[
\mathfrak{c}|\co \Sigma G^2/C_2(G)\to \Sigma [G,G]
\]
split up to homotopy?
\end{question}

{\footnotesize {\sc \noindent Omar Antol\'{\i}n-Camarena\\
   Instituto de Matem\'aticas, UNAM, Mexico City, Mexico}\\
  \emph{E-mail address}:
  \href{mailto:omar@matem.unam.mx}{\texttt{omar@matem.unam.mx}}}
\medskip

{\footnotesize {\sc \noindent Simon Gritschacher\\
  Department of Mathematical Sciences, University of Copenhagen, Copenhagen, Denmark}\\
  \emph{E-mail address}:
  \href{mailto:gritschacher@math.ku.dk}{\texttt{gritschacher@math.ku.dk}}}
\medskip

{\footnotesize {\sc \noindent Bernardo Villarreal \\
  Instituto de Matem\'aticas, UNAM, Mexico City, Mexico}\\ 
  \emph{E-mail address}:
  \href{mailto:villarreal@matem.unam.mx}{\texttt{villarreal@matem.unam.mx}}}
\end{document}